\theoremstyle{plain}
\newtheorem{theorem}{Theorem}[section]
\newtheorem{corollary}[theorem]{Corollary}
\newtheorem{lemma}[theorem]{Lemma}
\newtheorem{proposition}[theorem]{Proposition}
\newtheorem{problem}[theorem]{Problem}
\theoremstyle{remark}
\newtheorem*{remarks}{Remarks}
\newcommand{\CC}{\mathbb{C}}
\newcommand{\RR}{\mathbb{R}}
\newcommand{\bx}{\mathbf{x}}
\newcommand{\by}{\mathbf{y}}
\begin{document}

\title[A four-mean theorem]{A four-mean theorem and its application to pseudospectra}

\author{Thomas Ransford}
\address{D\'epartement de math\'ematiques et statistique, Universit\'e Laval, Qu\'ebec (QC) G1V 0A6, Canada}
\email{ransford@mat.ulaval.ca} 

\author{Nathan Walsh}
\address{D\'epartement de math\'ematiques et statistique, Universit\'e Laval, Qu\'ebec (QC) G1V 0A6, Canada}
\email{nathan.walsh.2@ulaval.ca}

\keywords{Arithmetic mean, Geometric mean, Harmonic mean, Pseudospectrum, Singular value}

\subjclass[2010]{Primary 26E20; Secondary 15A18, 26D15}

\date{9 July 2022}

\begin{abstract}
Let $N$ be an integer with $N\ge 4$.
We show that, if $x_1,\dots,x_N$ and $y_1,\dots,y_N$ are $N$-tuples of strictly positive numbers
whose arithmetic, geometric and harmonic means agree, then 
\[
\max_j x_j <(N-2)\max_j y_j
\quad\text{and}\quad
\min_j x_j <(N-2)\min_j y_j.
\]
A generalized version of this result (where some of $x_j$ and $y_j$ are allowed to be zero) is used to show that, if $N\ge4$ and $A,B$ are $N\times N$ matrices with super-identical pseudospectra, 
then, for every polynomial $p$, we have
\[
\|p(A)\|< \sqrt{N-2}\|p(B)\|,
\]
unless $p(A)=p(B)=0$. This improves a previously known inequality to the point of being sharp, at least for $N=4$.
\end{abstract}

\keywords{Arithmetic mean, Geometric mean, Harmonic mean, Pseudospectrum, Singular value}

\subjclass[2010]{Primary 26E20; Secondary 15A18, 26D15}

\thanks{Ransford supported by grants from NSERC and the Canada Research Chairs program.
Walsh supported by an NSERC Undergraduate Student Research Award and an FRQNT Supplement.}

\maketitle

\section{Introduction and statement of results}\label{S:intro}

Our main result is the following theorem.

\begin{theorem}\label{T:4mean}
Let $N\ge4$ and let $x_1,\dots,x_N,y_1,\dots,y_N\ge0$.
If 
\begin{align}
\sum_{j=1}^N x_j&=\sum_{j=1}^N y_j,\label{E:am}\\
\prod_{j=1}^N x_j&=\prod_{j=1}^N y_j,\label{E:gm}\\
\intertext{and}
\sum_{k=1}^N\prod_{\substack{j=1\\j\ne k}}^Nx_j&=\sum_{k=1}^N\prod_{\substack{j=1\\j\ne k}}^Ny_j,\label{E:hm}
\end{align}
then
\begin{equation}\label{E:mm}
\max_j x_j\le (N-2)\max_j y_j.
\end{equation}
Equality holds in \eqref{E:mm} if and only if there exists $c\ge0$ such that,
after re\-arranging the $x_j$ and $y_j$ in decreasing order, we have
\[
(x_1,\dots,x_N)=c(N-2,0,\dots,0)
\quad\text{and}\quad
(y_1,\dots,y_N)=c(1,\dots,1,0,0).
\]
\end{theorem}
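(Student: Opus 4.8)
The plan is to recast the three symmetric-function identities as a single divisibility statement for polynomials, whittle the problem down to a short explicit list of extremal tuples by a compactness-and-Lagrange argument, and then finish by a direct computation. First I would set $P(t)=\prod_{j=1}^{N}(t-x_j)$ and $Q(t)=\prod_{j=1}^{N}(t-y_j)$, so that the coefficient of $t^{N-k}$ is $(-1)^{k}$ times the $k$-th elementary symmetric function of the roots. Then \eqref{E:am}, \eqref{E:gm} and \eqref{E:hm} say exactly that $P$ and $Q$ agree in the coefficients of $t^{N}$, $t^{N-1}$, $t^{1}$ and $t^{0}$; equivalently, $P(t)-Q(t)=t^{2}R(t)$ for some polynomial $R$ with $\deg R\le N-4$. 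By homogeneity I would rescale so that $\max_j y_j=1$ (if $\max_j y_j=0$ then $\max_j x_j=0$ by \eqref{E:am}, and \eqref{E:mm} is trivial), and since \eqref{E:mm} also holds trivially when $\max_j x_j\le1$, it remains to prove $\max_j x_j\le N-2$ under the assumption $\max_j x_j>1$.

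Next I would fix the common values $s=e_1$, $h=e_{N-1}$, $g=e_N$. The set $K$ of nonnegative $N$-tuples with these three symmetric functions is compact, so $\max_j x_j$ attains a maximum over $K$ and $\max_j y_j$ attains a minimum over $K$. At a tuple maximizing $\max_j x_j$, a Lagrange-multiplier computation against the three constraints — using $\partial e_N/\partial x_i=g/x_i$ and $\partial e_{N-1}/\partial x_i=h/x_i-g/x_i^{2}$ for $x_i>0$ — forces every entry other than the largest to satisfy one quadratic equation, so such a tuple has at most three distinct values (one of which may be $0$); the same holds at a tuple minimizing $\max_j y_j$. It therefore suffices to prove \eqref{E:mm} when both $P$ and $Q$ have the form $(t-X)(t-a)^{p}(t-b)^{q}$ with $a,b\ge0$ and $p+q=N-1$, with the triple $(s,h,g)$ matched.

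For each admissible choice of the exponents the three matched symmetric functions determine $X$ as a function of $(s,h,g)$, and a direct calculation from \eqref{E:am}--\eqref{E:hm} should show that the largest entry arising on the $x$-side never exceeds $N-2$ times the least possible largest entry on the $y$-side, with equality only when $g=h=0$ and $X=s$, that is $x=(s,0,\dots,0)$ and $y=\tfrac{s}{N-2}(1,\dots,1,0,0)$; undoing the rescaling would then yield \eqref{E:mm} together with its equality statement.

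I expect the main obstacle to be the reduction step: the region of feasible symmetric functions may fail to be connected, the function $\max_j x_j$ is not differentiable where two entries coincide, and the boundary cases $g=0$ and $P'(X)=0$ will demand separate treatment — and even granting the reduction, one still has to push the inequality through all of the extremal families rather than merely exhibiting the equality configuration. A conceivable alternative is to argue directly from $P-Q=t^{2}R$ with $\deg R\le N-4$: evaluating at $t=\max_j x_j$ gives $Q(\max_j x_j)=-(\max_j x_j)^{2}R(\max_j x_j)$, while $R$ is determined by its values $R(y_j)=P(y_j)/y_j^{2}$ at the $y_j$; squeezing the bound $\max_j x_j\le N-2$ out of this relation would be the crux.
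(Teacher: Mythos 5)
Your proposal shares the paper's overall framework — a compactness argument, Lagrange multipliers against the three constraints, reduction to tuples supported on few distinct values, and then a case-by-case algebraic check — so the strategy is sound in outline. The observation that \eqref{E:am}--\eqref{E:hm} are equivalent to $P(t)-Q(t)=t^{2}R(t)$ with $\deg R\le N-4$ is a nice reformulation, though neither you nor the paper actually exploits it in the main argument.

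However, the proof as written has a genuine gap precisely at the step you flag with ``a direct calculation should show.''  Once the Lagrange multipliers reduce the extremal tuples to a small family parametrized by two unknown values $u,v$ (the roots of the quadratic) and a multiplicity $r$, one is left with a system like the paper's \eqref{E:1'}--\eqref{E:3'}.  Ruling out nontrivial solutions of that system is the entire content of the proof, and it is not a routine computation: the paper needs a dedicated algebraic lemma (Lemma~\ref{L:soln4mean}) whose two parts are established via Descartes' rule of signs combined with the observation that certain polynomials have a double or triple zero at $1$.  Nothing in your outline suggests how you would carry this out, and the claim that the extremal families can simply be ``pushed through'' is optimistic.

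A second structural gap is that you explicitly set aside the boundary case $g=e_N=0$, yet that is exactly where equality occurs: the extremal configuration has $\bx=c(N-2,0,\dots,0)$ and $\by=c(1,\dots,1,0,0)$, so any proof that postpones the boundary cases postpones the heart of the theorem.  The paper handles this by first establishing a \emph{three-mean theorem} (Theorem~\ref{T:3mean}, involving only arithmetic and geometric means, giving the weaker constant $N-1$).  That intermediate result is used twice in the four-mean proof: once to eliminate the Lagrange subcase in which some $x_j^*$ coincides with some $y_k^*$, and once to finish after showing that an optimizer must have a zero coordinate on both sides (at which point one drops a coordinate from each and applies the three-mean theorem in dimension $N-1$).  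Your outline has no analogue of this intermediate result, and without it the degenerate and coincident-value cases have no clean treatment.  In short: the skeleton matches the paper's, but the load-bearing pieces — the Descartes-type lemma and the three-mean theorem used for the degenerate cases — are absent.
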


\begin{corollary}\label{C:4mean}
Let $N\ge4$. If $x_1,\dots,x_N$ and $y_1,\dots,y_N$ are $N$-tuples of strictly positive numbers whose 
arithmetic, geometric and harmonic means agree, then
\begin{equation}\label{E:mmmm}
\max_j x_j<(N-2)\max_j y_j
\quad \text{and}\quad
\min_j x_j<(N-2)\min_j y_j.
\end{equation}
\end{corollary}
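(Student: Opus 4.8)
The plan is to derive Corollary~\ref{C:4mean} directly from Theorem~\ref{T:4mean}, the only work being a translation of hypotheses and a reciprocal trick for the statement about minima. First I would check that the three-means hypothesis of the corollary is a special case of conditions \eqref{E:am}--\eqref{E:hm}. Equality of the arithmetic means is \eqref{E:am} up to the factor $N$, and equality of the geometric means is \eqref{E:gm}. Equality of the harmonic means says $\sum_j 1/x_j=\sum_j 1/y_j$; since $\sum_{k}\prod_{j\ne k}x_j=\bigl(\prod_j x_j\bigr)\sum_k 1/x_k$, and \eqref{E:gm} guarantees the common product $\prod_j x_j=\prod_j y_j$ multiplies both sides, this is exactly \eqref{E:hm}. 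Hence Theorem~\ref{T:4mean} applies and gives $\max_j x_j\le(N-2)\max_j y_j$.

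Next I would rule out equality to get the strict inequality for the maximum. By the equality clause of Theorem~\ref{T:4mean}, equality forces (after reordering) $(y_1,\dots,y_N)=c(1,\dots,1,0,0)$ for some $c\ge0$, so in particular two of the $y_j$ vanish; this contradicts the strict positivity assumed in the corollary. Therefore $\max_j x_j<(N-2)\max_j y_j$.

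For the minimum, the key observation is that the reciprocals $1/x_1,\dots,1/x_N$ and $1/y_1,\dots,1/y_N$ are again strictly positive $N$-tuples, and their arithmetic, geometric and harmonic means are respectively the reciprocals of the harmonic, geometric and arithmetic means of the original tuples. Consequently the reciprocal tuples again satisfy the hypotheses of the corollary, so the maximum inequality already proved applies to them. Applying it to $(1/y_j)$ and $(1/x_j)$ in that order gives $\max_j(1/y_j)<(N-2)\max_j(1/x_j)$, that is $1/\min_j y_j<(N-2)/\min_j x_j$, and multiplying through by the positive quantity $\min_j x_j\,\min_j y_j$ rearranges this to $\min_j x_j<(N-2)\min_j y_j$, completing the proof.

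I do not expect a genuine obstacle here: the argument is short once Theorem~\ref{T:4mean} is in hand. The only points needing care are the dictionary between ``the three means agree'' and the symmetric-function identities \eqref{E:am}--\eqref{E:hm}, and the passage to reciprocals used to transfer the statement about maxima into one about minima.
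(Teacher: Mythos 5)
Your proof is correct and follows essentially the same route as the paper: you translate the agreement of the three means into the symmetric-function conditions \eqref{E:am}--\eqref{E:hm} of Theorem~\ref{T:4mean}, invoke its equality clause to obtain strictness (the equality case would force two of the $y_j$ to vanish, contradicting positivity), and then pass to reciprocals applied to $(1/y_j)$ and $(1/x_j)$ to deduce the statement about minima. The paper's own proof is slightly terser about why the maximum inequality is strict, but otherwise your reasoning matches it step for step.
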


\begin{proof}
If all the $x_j$ and $y_j$ are strictly positive, then, under condition~\eqref{E:gm}, the condition \eqref{E:hm}
is equivalent to the statement that $\sum_{j=1}^N1/x_j=\sum_{j=1}^N1/y_j$, in other words, that the harmonic means agree.
Therefore the inequality between maxima in \eqref{E:mmmm} is a consequence of Theorem~\ref{T:4mean}. 

The inequality between minima follows by applying the one for maxima to the $N$-tuples $(1/y_1,\dots,1/y_N)$
and $(1/x_1,\dots,1/x_N)$, whose arithmetic, geometric and harmonic means also agree.
\end{proof}

\begin{remarks}
(i) Corollary~\ref{C:4mean} shows that, 
if the $x_j$ and $y_j$ are constrained so that
three of their means (arithmetic, geometric, harmonic) agree, then the fourth
(maximum or minimum) satisfies the inequality~\eqref{E:mmmm}. 
For this reason, we have dubbed it (and by extension Theorem~\ref{T:4mean})
a `four-mean theorem'. 

(ii) The more complicated formulation of the condition~\eqref{E:hm} allows for the possibility
that some of the $x_j$ and $y_j$ are zero. This is useful since it contains the case of equality.

(iii) Theorem~\ref{T:4mean} is only interesting if $N\ge4$ since, 
if $N=3$, then the conditions \eqref{E:am}, \eqref{E:gm} and \eqref{E:hm}
already imply that $x_j$ are a permutation of the~$y_j$. 
Indeed, they tell us that the polynomials $(t-x_1)(t-x_2)(t-x_3)$
and $(t-y_1)(t-y_2)(t-y_3)$ have the same coefficients, and therefore the same roots.

(iv) There are also two-mean and three-mean theorems. In fact it is obvious that if just
\eqref{E:am} holds, then \eqref{E:mm} is true with $(N-2)$ replaced by $N$, and this is optimal. Also,
if \eqref{E:am} and \eqref{E:gm} both hold, then \eqref{E:mm} is true with $(N-2)$ replaced by $(N-1)$,
and again this is optimal.
However, this is  less obvious, and indeed this three-mean theorem is a step on the route
to establishing Theorem~\ref{T:4mean}.
\end{remarks}

Our motivation for studying constraints and inequalities such as those in Theorem~\ref{T:4mean}
arises from a problem in the theory of pseudospectra of matrices,
which we now describe. For background on pseudospectra, 
we refer to the book \cite{TE05} and the survey article  \cite{Ra10}.

It is known that the pseudospectra of a matrix $A$, namely the level sets of $\|(A-zI)^{-1}\|$,
do not suffice to determine the operator norm of polynomials of $A$, see e.g.\ \cite[Section~47]{TE05}
and \cite[Section~2]{Ra10}, in particular \cite[Theorem~2.3]{Ra10}.
In an attempt to overcome this problem, the authors of \cite{FR09} proposed looking at not only 
the lowest singular values of $A-zI$ for $z\in\CC$ (namely the reciprocals of $\|(A-zI)^{-1}\|$), but all the singular values.
They showed that this information
is sufficient to determine the operator norm of any polynomial of $A$,
up to a factor depending only on the dimension.

Here is a more precise formulation of their result.
We say that two complex $N\times N$ matrices $A,B$  have \emph{super-identical pseudospectra} if 
\[
s_j(A-zI)=s_j(B-zI)\quad(j=1,2,\dots,N,~z\in\CC),
\]
where $s_1,\dots,s_N$ denote the singular values, ordered so that $s_1\ge \cdots\ge s_N$.
It was shown in \cite[Theorem~1.3]{FR09} that, if $A,B$
are $N\times N$ matrices with super-identical pseudospectra and $p$ is a polynomial, then
\begin{equation}\label{E:FRineq}
\|p(A)\|\le\sqrt{N} \|p(B)\|.
\end{equation}

It is not obvious, \emph{a priori}, whether the constant $\sqrt{N}$ can be replaced by~$1$ in this inequality.
In fact it can if $N=1,2,3$ (see \cite[Section~4]{Ra10}), but an example
constructed in \cite{FR09} shows that this is no longer the case when $N\ge 4$.
However, the authors of \cite{FR09} mentioned that they did not know whether the bound $\sqrt{N}$ 
in \eqref{E:FRineq} is optimal.
The following result shows that it is not, and replaces it with a bound that is sharp, at least when $N=4$.

\begin{theorem}\label{T:FRsharp}
Let $N\ge4$ and let $A,B$ be complex $N\times N$ matrices with super-identical pseudospectra. 
Then, for any polynomial $p$, we have
\begin{equation}\label{E:FRsharp}
\|p(A)\|<\sqrt{N-2}\|p(B)\|,
\end{equation}
unless $p(A)=p(B)=0$.
The constant $\sqrt{N-2}$ is sharp at least  if $N=4$.
\end{theorem}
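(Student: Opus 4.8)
The plan is to apply Theorem~\ref{T:4mean} to the squared singular values. Set
\[
x_j=s_j(p(A))^2,\qquad y_j=s_j(p(B))^2\qquad(j=1,\dots,N),
\]
which are non-negative; since $\max_j x_j=\|p(A)\|^2$ and $\max_j y_j=\|p(B)\|^2$, the conclusion \eqref{E:mm} says precisely that $\|p(A)\|^2\le(N-2)\|p(B)\|^2$. The substance of the proof is to deduce the three hypotheses \eqref{E:am}, \eqref{E:gm} and \eqref{E:hm} from the assumption $s_j(A-zI)=s_j(B-zI)$ ($j=1,\dots,N$, $z\in\CC$).

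I would first record the structural consequences of this assumption. Taking the product over $j$ gives $|\det(A-zI)|=|\det(B-zI)|$ for all $z$, so $A$ and $B$ have the same (monic) characteristic polynomial, and hence the same eigenvalues with multiplicity. Passing to inverses gives $\|(A-zI)^{-1}\|_{\mathrm{HS}}^2=\sum_j s_j(A-zI)^{-2}=\|(B-zI)^{-1}\|_{\mathrm{HS}}^2$ off the common spectrum (here $\|\cdot\|_{\mathrm{HS}}$ denotes the Hilbert--Schmidt norm); expanding $(A-zI)^{-1}=-\sum_{k\ge0}A^kz^{-k-1}$ for large $|z|$ and matching coefficients yields $\tr\bigl((A^*)^mA^n\bigr)=\tr\bigl((B^*)^mB^n\bigr)$ for all $m,n\ge0$, whence $\tr\bigl(q_1(A)^*q_2(A)\bigr)=\tr\bigl(q_1(B)^*q_2(B)\bigr)$ for all polynomials $q_1,q_2$ and, in particular,
\begin{equation}\label{E:HSeq}
\|q(A)\|_{\mathrm{HS}}=\|q(B)\|_{\mathrm{HS}}\qquad\text{for every polynomial }q .
\end{equation}
(Note that \eqref{E:HSeq} with $q=p$, combined with $\|M\|\le\|M\|_{\mathrm{HS}}\le\sqrt N\,\|M\|$, already gives the bound \eqref{E:FRineq} of \cite{FR09}; the gain here comes from using \eqref{E:am}, \eqref{E:gm} and \eqref{E:hm} together.) Now \eqref{E:am} is the case $q=p$ of \eqref{E:HSeq}; \eqref{E:gm} holds because $\det p(A)=\prod_i p(\lambda_i)=\det p(B)$, where $\lambda_1,\dots,\lambda_N$ are the common eigenvalues; and for \eqref{E:hm} I would combine the identity $\sum_k\prod_{j\ne k}s_j(M)^2=\|\operatorname{adj}(M)\|_{\mathrm{HS}}^2$ (the singular values of $\operatorname{adj}(M)$ being the products $\prod_{j\ne k}s_j(M)$) with the fact that the Cayley--Hamilton theorem expresses $\operatorname{adj}(p(A))=g(A)$ for a polynomial $g$ determined by $p$ and the characteristic polynomial of $p(A)$ --- which equals that of $p(B)$. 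Then $\operatorname{adj}(p(B))=g(B)$ with the same $g$, and \eqref{E:HSeq} applied to $g$ gives \eqref{E:hm}. Theorem~\ref{T:4mean} now yields $\|p(A)\|^2\le(N-2)\|p(B)\|^2$.

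To make the inequality strict away from the trivial case, observe first that if $p(B)=0$ then $\|p(A)\|_{\mathrm{HS}}=0$ by \eqref{E:HSeq}, so $p(A)=0$ as well. Assume then $p(B)\ne0$ and, for contradiction, $\|p(A)\|^2=(N-2)\|p(B)\|^2$. By the equality clause of Theorem~\ref{T:4mean} there is $c>0$ with $(s_j(p(A))^2)_j=c(N-2,0,\dots,0)$ and $(s_j(p(B))^2)_j=c(1,\dots,1,0,0)$, so $p(A)=\sqrt{(N-2)c}\,uv^*$ has rank one ($\|u\|=\|v\|=1$) and $p(B)=\sqrt c\,W$ with $W$ a partial isometry of rank $N-2$. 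Using $\tr\bigl(p(A)^{*j}p(A)^k\bigr)=\tr\bigl(p(B)^{*j}p(B)^k\bigr)$ and evaluating the left-hand side from the rank-one form of $p(A)$, one finds $\tr(W^{*j}W^k)=(N-2)\overline{\lambda}^{\,j-1}\lambda^{k-1}$ for all $j,k\ge1$, where $\lambda=\sqrt{N-2}\,v^*u$; this Gram matrix has rank one, which forces $W^k=\lambda^{k-1}W$ for all $k$, i.e.\ $W(W-\lambda I)=0$. Since $A$ and $B$ have equal spectra, the only possible nonzero eigenvalue of $W$ is $\lambda$, with multiplicity one. If $\lambda\ne0$ this makes $W$ diagonalizable with $\operatorname{rank}(W)=1$, contradicting $\operatorname{rank}(W)=N-2\ge2$; if $\lambda=0$ then $W^2=0$, so $N-2=\operatorname{rank}(W)\le N-\operatorname{rank}(W)$, forcing $N\le4$. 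This closes every case except $N=4$ with $\lambda=0$ (where $p(A)$ and $p(B)$ are nilpotent), which I would dispose of by bringing in the further relations obtained from matching coefficients in $z,\bar z$ in $\tr\bigl((A-zI)^*(A-zI)\bigr)^m=\tr\bigl((B-zI)^*(B-zI)\bigr)^m$ for $m\ge2$.

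It remains to show that $\sqrt{N-2}$ is sharp when $N=4$; here the equality analysis is the template. I would exhibit a one-parameter family of $4\times4$ matrices $A_t,B_t$ with $s_j(A_t-zI)=s_j(B_t-zI)$ for all $z$, together with polynomials $p_t$, such that $p_t(A_t)$ tends to a rank-one matrix, $p_t(B_t)$ tends to a scalar multiple of a rank-two partial isometry, and $\|p_t(A_t)\|^2/\|p_t(B_t)\|^2\to2$ --- obtained by degenerating a rescaled form of the example in \cite{FR09}. I expect the two hardest points to be the verification of \eqref{E:hm} (recognising the relevant symmetric function of the squared singular values as a Hilbert--Schmidt norm of a polynomial in $A$) and the $N=4$ endgame --- both the borderline case $\lambda=0$ of the strict inequality and the sharpness construction --- where the coarse symmetric-function data no longer suffice and one must use the full force of the hypothesis $s_j(A-zI)=s_j(B-zI)$.
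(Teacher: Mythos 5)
Your derivations of the hypotheses \eqref{E:am}, \eqref{E:gm}, \eqref{E:hm} are correct but take a genuinely different route from the paper. The paper proves its Lemma~\ref{L:FRsharp} by citing \cite{FR09} for \eqref{E:FRsharp1}, invoking the Armentia--Gracia--Velasco similarity theorem \eqref{E:similar} for \eqref{E:FRsharp2}, and using a B\'ezout-identity argument ($pq+fr=1$, so $q(A)=p(A)^{-1}$, then apply \eqref{E:FRsharp1} to $q$, then perturb $p\mapsto p+\epsilon$) for \eqref{E:FRsharp3}. Your argument instead deduces \eqref{E:gm} directly from equality of characteristic polynomials (no similarity needed) and \eqref{E:hm} from the adjugate identity $\operatorname{adj}(p(A))=g(A)$, applying \eqref{E:HSeq} to $g$; this neatly avoids the perturbation step and does not require the (nontrivial) result of \cite{AGV12} at all for the lemma itself.

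However, there is a genuine gap in your strictness argument. Because you never establish that $A$ and $B$ are similar, you cannot use the paper's one-line finish: in the paper, $p(A)$ and $p(B)$ are similar (by \cite{AGV12}), hence have the same rank, whereas the equality case of Theorem~\ref{T:4mean} forces ranks $1$ and $N-2\ge 2$ unless $c=0$. Your substitute via Gram matrices of $\{W^k\}$ does handle $\lambda\ne 0$ and, when $\lambda=0$, handles $N\ge 5$ (since $W^2=0$ forces $\operatorname{rank}W\le N/2<N-2$); but the borderline case $N=4$, $\lambda=0$ is exactly the one you cannot close, and you only gesture at matching higher-order resolvent coefficients. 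Since $N=4$ is the case of greatest interest (it is where sharpness is claimed), this is a real hole, not a formality; the cleanest fix is to bring in the similarity theorem of \cite{AGV12}, which collapses the entire equality analysis to the rank comparison above. The sharpness statement is also left unproved: you describe what a degenerating family should look like, but the paper supplies the explicit trigonometric example of Proposition~\ref{P:4x4} and computes $s_1(A^2)/s_1(B^2)=\cos\alpha/\cos\beta$, letting $\alpha\to0$, $\beta=\pi/4$; without such a concrete construction the constant $\sqrt{N-2}$ is not shown to be attained.
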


The proof of \eqref{E:FRsharp} is based on the four-mean theorem, Theorem~\ref{T:4mean}.
The proof of sharpness in the case $N=4$ is based on the example from \cite{FR09} mentioned above,
which will be described in detail in Section~\ref{S:sharpness} below.
The same example also permits us to deduce a related result, which we now describe.

Armentia, Gracia and Velasco \cite{AGV12} showed that, 
if $A,B$ have super-identical pseudospectra, 
then they are similar, in other words, there exists an invertible matrix $W$ such that
\begin{equation}\label{E:similar}
B=W^{-1}AW.
\end{equation}
In this case, $p(B)=W^{-1}p(A)W$ for every polynomial $p$,
and so 
\[
\|p(B)\|\le\|W^{-1}\|\|p(A)\|\|W\|.
\]
It is thus tempting to believe that \eqref{E:FRsharp} and \eqref{E:similar}
can be subsumed in a single result in which \eqref{E:similar} holds with
$\|W\|\|W^{-1}\|\le \sqrt {N-2}$. Even if this is false, one might hope that, at the very least,
$W$ may be chosen so that $\|W\|\|W^{-1}\|\le C(N)$ for some constant $C(N)$ depending only on~$N$.
The following theorem shows that, perhaps surprisingly, 
there is no such result. 

\begin{theorem}\label{T:quantsim}
Given $M>0$, there exist $4\times 4$ matrices $A,B$ with super-identical pseudospectra such that
\begin{equation}\label{E:quantsim}
\inf\Bigl\{\|W\|\|W^{-1}\|: B=W^{-1}AW\Bigr\}>M.
\end{equation}
\end{theorem}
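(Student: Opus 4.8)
The plan is to exhibit an explicit one-parameter family of pairs $(A_t, B_t)$ of $4\times 4$ matrices, all with super-identical pseudospectra, for which the similarity constant $\inf\{\|W\|\|W^{-1}\|: B_t = W^{-1}A_t W\}$ tends to infinity as $t$ runs through the parameter range. The natural candidate is the family arising from the sharpness example of \cite{FR09} described in Section~\ref{S:sharpness}; I would take $A_t$ and $B_t$ to be that example, suitably parametrized so that as the parameter degenerates, the two matrices approach a common limit that is \emph{not} diagonalizable, or more precisely so that the ratio of norms of some polynomial $p(A_t)/\|p(B_t)\|$ approaches the critical value $\sqrt{N-2}=\sqrt 2$. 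The key point is the lower bound $\|W\|\|W^{-1}\| \ge \|p(A_t)\|/\|p(B_t)\|$ valid for every polynomial $p$ and every intertwiner $W$, which comes directly from $p(B_t) = W^{-1}p(A_t)W$ and submultiplicativity of the operator norm. So it would \emph{suffice} to show that the bound in Theorem~\ref{T:FRsharp} is asymptotically attained along the family, i.e. that $\sup_p \|p(A_t)\|/\|p(B_t)\| \to \sqrt 2$.

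However, Theorem~\ref{T:FRsharp} only gives $\sup_p \|p(A_t)\|/\|p(B_t)\| < \sqrt 2$ with a \emph{strict} inequality for each fixed $t$, and a sequence of numbers strictly below $\sqrt 2$ need not approach $\sqrt 2$; moreover even attaining the ratio $\sqrt 2$ in the limit would only force $\|W\|\|W^{-1}\| \ge \sqrt 2$, not $\to\infty$. So the mechanism must be different: the blow-up of the similarity constant should come not from the polynomial-ratio bound being nearly sharp, but from the intertwining matrix $W$ itself being forced to become ill-conditioned. The cleanest route is the following. For each $t$, the condition $B_t = W^{-1}A_t W$ together with super-identical pseudospectra pins down $W$ almost uniquely: if $A_t, B_t$ are chosen to have simple spectrum, then any $W$ intertwining them is determined up to left multiplication by a diagonal matrix (in the eigenbasis) and possibly a permutation. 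I would compute, for the explicit example, the essentially unique $W = W_t$ (say normalized) and show directly that $\|W_t\|\|W_t^{-1}\| \to \infty$ as $t$ degenerates, by exhibiting an eigenvector of $A_t$ and the corresponding eigenvector of $B_t$ whose "angle" or length mismatch blows up. Then take $M$ given and choose $t$ with $\|W_t\|\|W_t^{-1}\| > M$; since every intertwiner has this conditioning (up to the harmless normalizations, which do not decrease $\|W\|\|W^{-1}\|$), \eqref{E:quantsim} follows.

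In more detail, the steps are: (1) recall from Section~\ref{S:sharpness} the $4\times4$ matrices realizing the sharpness in Theorem~\ref{T:FRsharp}, and introduce a degeneration parameter $t$ (so that for $t$ in an appropriate range the matrices have super-identical pseudospectra, and as $t\to t_0$ the pair degenerates); (2) verify that $A_t$ and $B_t$ are diagonalizable with simple spectrum for $t$ in the relevant range, and compute eigenbases; (3) observe that any $W$ with $B_t = W^{-1}A_t W$ must carry eigenvectors of $A_t$ to eigenvectors of $B_t$ (matching equal eigenvalues), hence in the two eigenbases $W$ is diagonal, so $\|W\|\|W^{-1}\| = \max_i |\lambda_i| \cdot \max_i |\lambda_i|^{-1}$ where $\lambda_i$ are the diagonal entries, and this is minimized (over the free scalars) by a quantity depending only on the change-of-basis matrices between the standard basis and the two eigenbases; (4) show this minimal conditioning is bounded below by a quantity $\kappa(t)$ built from the eigenvector matrices of $A_t$ and $B_t$, and that $\kappa(t) \to \infty$ as $t\to t_0$, e.g. because one eigenvector of $A_t$ becomes nearly parallel to another while the corresponding eigenvectors of $B_t$ stay bounded away from each other (or vice versa); (5) conclude. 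The main obstacle I expect is step~(4): carefully tracking the eigenvector matrices of the explicit example through the degeneration and extracting a clean, monotone lower bound on the conditioning number, rather than just a finite positive one. This is a concrete but delicate linear-algebra computation with the specific $4\times4$ matrices, and getting a usable closed form (or at least a verifiable asymptotic) for $\kappa(t)$ is where the real work lies; the rest of the argument is essentially formal once the example is written down.
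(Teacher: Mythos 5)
Your proposal misses the paper's key idea and in addition contains a concrete error in its alternative plan.

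You correctly rule out the naive bound $\|W\|\|W^{-1}\|\ge\|p(A)\|/\|p(B)\|$ as insufficient, since Theorem~\ref{T:FRsharp} caps that ratio at $\sqrt2$. But the right generalization is close at hand and you do not take it: the identity $p(B)=W^{-1}p(A)W$ gives, by the standard inequality $s_j(XYZ)\le\|X\|\,s_j(Y)\,\|Z\|$, the bound
\[
s_j(p(B))\le\|W^{-1}\|\,s_j(p(A))\,\|W\|\qquad(j=1,\dots,N),
\]
for \emph{every} singular value, not just the largest. This is what the paper exploits. In the example of Proposition~\ref{P:4x4}, with $\beta=\pi/4$ fixed and $\alpha\to0$, the \emph{top} singular value ratio $s_1(A^2)/s_1(B^2)=\cos\alpha/\cos\beta$ stays bounded (approaching $\sqrt2$), but the \emph{second} ratio $s_2(B^2)/s_2(A^2)=\sin\beta/\sin\alpha$ blows up. Taking $p(z)=z^2$ and $j=2$ immediately gives $\|W\|\|W^{-1}\|\ge\sin\beta/\sin\alpha>M$ for $\alpha$ small enough. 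The proof is three lines once one remembers to look past $s_1$.

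Your alternative route via eigenbases has a fatal false premise: the matrices $A,B$ in Proposition~\ref{P:4x4} are strictly upper triangular, hence nilpotent with all eigenvalues equal to zero. They are emphatically not diagonalizable with simple spectrum, so steps (2)--(3) of your outline cannot be carried out as stated. Each is in fact a single Jordan block of size $4$, and the centralizer of such a block is the full $4$-dimensional algebra of polynomials in the block; consequently the intertwiner $W$ is very far from being determined up to a diagonal, and your step (4) would require minimizing the condition number over this whole family, a substantially harder computation than you suggest. Even repaired, this is a much more laborious path than the singular-value argument the paper uses.
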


The rest of the paper is organized as follows.
In Section~\ref{S:3mean} we establish the three-mean theorem mentioned earlier,
which is then used in Section~\ref{S:4mean} to prove the four-mean theorem, Theorem~\ref{T:4mean}.
Theorem~\ref{T:FRsharp} is deduced in Section~\ref{S:FRsharp},
except for the sharpness statement, which is established in Section~\ref{S:sharpness},
where Theorem~\ref{T:quantsim} is also proved.

%%%%%%%%%%%%%%%%%%%%%%%%%

\section{Three-mean theorem}\label{S:3mean}

Our goal in this section is to establish the following theorem.

\begin{theorem}\label{T:3mean}
Let $N\ge3$ and let $x_1,\dots,x_N,y_1,\dots,y_N\ge0$.
If 
\[
\sum_{j=1}^N x_j=\sum_{j=1}^N y_j
\quad\text{and}\quad
\prod_{j=1}^N x_j=\prod_{j=1}^N y_j,
\]
then
\begin{equation}\label{E:mm3}
\max_j x_j\le (N-1)\max_j y_j.
\end{equation}
Equality holds in \eqref{E:mm3} if and only if there exists $c\ge0$ such that,
after re\-arranging the $x_j$ and $y_j$ in decreasing order, we have
\[
(x_1,\dots,x_N)=c(N-1,0,\dots,0)
\quad\text{and}\quad
(y_1,\dots,y_N)=c(1,\dots,1,1,0).
\]
\end{theorem}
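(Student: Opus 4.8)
The plan is to reduce the problem to a one-variable extremal question by fixing the constraints and pushing mass to the extremes. First I would normalize: by homogeneity, scale so that $\max_j y_j = 1$, and set $m := \max_j x_j$; the goal is then to show $m \le N-1$ with equality only in the stated configuration. Write $S := \sum x_j = \sum y_j$ and $P := \prod x_j = \prod y_j$. Since $0 \le y_j \le 1$ for all $j$, we have $S \le N$; and since $P = \prod y_j \le y_k$ for each $k$ (as the other factors are $\le 1$) — wait, that only works if all $y_j \le 1$, which holds — we get $P \le \max_j y_j = 1$, and more usefully $P \le y_j$ for every $j$.

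The core step is to bound $m$ given only $\sum x_j = S$, $\prod x_j = P$, and $x_j \ge 0$. Here the elementary fact is: among nonnegative tuples with prescribed sum $S$ and prescribed product $P$, the maximum possible value of the largest entry is achieved by putting one entry as large as possible and the rest equal. Concretely, if $x_1 = m$ is the max, then the remaining $N-1$ entries have sum $S - m$ and product $P/m$, so by AM–GM, $(P/m)^{1/(N-1)} \le (S-m)/(N-1)$, i.e.
\[
m(S-m)^{N-1} \ge (N-1)^{N-1} P.
\]
Combined with $P \le y_j$ for all $j$ — in particular I want a lower bound on $P$ in terms of the $y$-data, so instead I should exploit that $S = \sum y_j$ and $P = \prod y_j$ with $0 \le y_j \le 1$ force $S$ and $P$ to be linked. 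The cleanest route: among $y$-tuples in $[0,1]^N$ with sum $S$, the product $P = \prod y_j$ is minimized at an extreme point of the constraint polytope, namely when as many coordinates as possible equal $0$ or $1$; this gives a lower bound $P \ge$ (something like) $\max(0, S - (N-1))$ when $S \le N$, with the relevant extremal tuple being $(1,\dots,1,S-k,0,\dots,0)$. Feeding the worst case $S = N-1$, $P$ can be as small as $0$ — so the inequality $m(S-m)^{N-1} \ge (N-1)^{N-1}P$ alone, with $P$ possibly $0$, does not immediately pin down $m$; I need to use all three facts together, treating $(S,P)$ as constrained by the $y$'s and then maximizing $m$ over $x$'s subject to the same $(S,P)$.

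So the real argument is a two-sided optimization: maximize $m$ over all valid $(x,y)$ pairs. I would argue that at the optimum the $x$-tuple is $(m, a, \dots, a)$ (one large, rest equal — by an AM–GM/smoothing argument on the $N-1$ small entries, which preserves their sum and only decreases their product, relaxing a constraint) and the $y$-tuple is $(1,\dots,1,b,0,\dots,0)$ (extreme point of $[0,1]^N$ with the same sum, which minimizes the product, again relaxing the product constraint in the favorable direction). Then the problem becomes: choose integers and reals so that $m + (N-1)a = k + b$ (with $k$ ones among the $y$'s), $m a^{N-1} = b$, and maximize $m$. Since the $x$-product must equal the $y$-product and the latter is small, $a$ must be small, and one checks that the supremum of $m$ is $N-1$, attained in the limit $a \to 0$, $b \to 0$, $k = N-1$, forcing $S \to N-1$ and the configuration $(N-1,0,\dots,0)$ versus $(1,\dots,1,0)$ after scaling. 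The case of equality requires checking that when $m = (N-1)\max y_j$ exactly, every smoothing/extremal-point step was already tight, which forces all the small $x_j$ to be $0$ and the $y_j$ to be $0$ or $\max_j y_j$, with exactly $N-1$ of them equal to the max (the product constraint then forbids all $N$ being equal).

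The main obstacle I expect is making the double extremization rigorous, in particular justifying that one may simultaneously assume the $x$-tuple has the ``one big, rest equal'' form and the $y$-tuple has the ``$0/1$ extreme point'' form without these reductions conflicting through the shared constraints $S$ and $P$ — the smoothing on the $x$-side changes $P$ (decreases it), and the extremal-point move on the $y$-side also changes $P$ (decreases it), so one must be careful about which direction genuinely relaxes the problem and whether a fixed-point/compactness argument is needed to know an extremal pair exists. A clean way around this is to phase it: first prove the pure inequality $m(S-m)^{N-1}\ge (N-1)^{N-1}P$ from the $x$-side (AM–GM, no loss), then separately prove $P \ge S-(N-1)$ (valid when $\max_j y_j \le 1$, since $\prod y_j \ge \sum y_j - (N-1)$ for $y_j\in[0,1]$, an easy induction), and finally combine: $m(S-m)^{N-1} \ge (N-1)^{N-1}(S-(N-1))$ together with $m \le S$ and elementary calculus in $S \in [N-1, N]$ (or $m \le N-1$ trivially if $S \le N-1$) to force $m \le N-1$, reading off equality from the equality cases of AM–GM and of $\prod y_j \ge \sum y_j-(N-1)$.
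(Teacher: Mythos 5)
Your clean route at the end of the proposal is correct, and it is genuinely different from the paper's. The paper treats the problem as a constrained optimization: it normalizes as you do, observes the feasible set is compact so an extremizer $(\bx^*,\by^*)$ exists, and then uses a Lagrange-multiplier argument to show that any extremizer with all entries strictly positive would force $x_j^*$ to be constant and $y_j^*$ to take only two values, which leads to a contradiction with $x_1^*\ge N-1$; this establishes that some coordinate must vanish, after which the bookkeeping $N-1\le x_1^*\le\sum x_j^*=\sum y_j^*\le N-1$ finishes the job. Your alternative dispenses with compactness and multipliers entirely: AM--GM on the $N-1$ non-maximal $x_j$ gives $m(S-m)^{N-1}\ge (N-1)^{N-1}P$, the Weierstrass-type inequality $\prod y_j\ge 1-\sum(1-y_j)$ on $[0,1]^N$ gives $P\ge S-(N-1)$, and combining these with $m\le S\le N$ forces $m\le N-1$ by a one-variable calculus check (the map $t\mapsto t(S-t)^{N-1}$ is strictly decreasing on $[S/N,S]\supset[N-1,S]$, and $(N-1)(S-N+1)^{N-1}<(N-1)^{N-1}(S-N+1)$ because $S-N+1\le 1<N-1$). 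The equality case then forces $S=N-1$, hence $x_j=0$ for $j\ge 2$, hence $P=0$, hence exactly one $y_j=0$ and the rest equal to $1$, matching the paper's characterization. Your version is more elementary and self-contained; the advantage of the paper's Lagrange-multiplier framework is that it carries over almost verbatim to the harder four-mean case (Theorem~\ref{T:4mean}), whereas there is no obvious four-mean analogue of the Weierstrass product inequality, so your approach would need a new second ingredient there. Do note that the long ``smoothing'' discussion preceding your clean route is not needed and, as you yourself flag, has unresolved issues about which direction relaxes the shared constraints; the final three-step combination is the argument that actually works.
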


We shall prove this theorem by reformulating it as an optimization result.
Since the result obviously holds if all the numbers $y_j$ are equal to zero,
we can suppose that at least one of them is non-zero.
Normalizing so that $\max_j x_j =x_1$ and $\max_j y_j=y_1=1$, 
we are led to consider the following problem.

\begin{problem}\label{Pb:3mean}
Let $N\ge3$. 
Maximize $x_1$ subject to the following constraints:
\begin{equation}\label{E:constraints3mean}
\left\{
\begin{aligned}
&x_j\ge0 ~(j=1,\dots,N),\\
&y_1=1,~0\le y_j\le 1 ~(2\le j\le N),\\
&\sum_{j=1}^N x_j=\sum_{j=1}^N y_j,\\
&\prod_{j=1}^N x_j=\prod_{j=1}^N y_j.
\end{aligned}
\right.
\end{equation}
\end{problem}

Here is the solution to Problem~\ref{Pb:3mean}, which establishes Theorem~\ref{T:3mean}.

\begin{theorem}\label{T:soln3mean}
Let $N\ge3$. The maximum value of $x_1$ subject to the constraints \eqref{E:constraints3mean} is $N-1$, 
attained uniquely when
$x_2=\cdots=x_N=0$ and all but one of the $y_j$ are equal to $1$, the remaining one being equal to $0$.
\end{theorem}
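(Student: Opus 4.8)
The plan is to treat Problem~\ref{Pb:3mean} directly as a constrained optimization and argue that at a maximizer the $y_j$ must all be as large as possible ($=1$) except possibly one, while the $x_j$ must be as concentrated as possible. First I would observe that the feasible set is compact: the constraints force $\sum x_j=\sum y_j\le N$, so $x_1\le N$, and the set cut out by \eqref{E:constraints3mean} is closed and bounded, hence a maximizer exists. Write $p=\prod_j y_j$ and $s=\sum_j y_j$; these are the only two quantities through which the $y_j$ enter the constraints on the $x_j$, so I would first fix the $y$-side and ask: over $x_j\ge0$ with $\sum x_j=s$ and $\prod x_j=p$, how large can $x_1$ be? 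If $p>0$ all $x_j>0$, and shrinking $x_2,\dots,x_N$ toward each other while keeping their sum fixed \emph{decreases} their product (AM--GM), so to keep $\prod x_j=p$ fixed we can push mass onto $x_1$; the extreme configuration has $x_2=\cdots=x_N$ equal to some common value $t$, and then $x_1=s-(N-1)t$ with $x_1 t^{N-1}=p$. Eliminating, $x_1$ is maximized by taking $t$ as small as possible, i.e. $t\to0$, which forces $p\to0$; so among feasible $x$-tuples the supremum of $x_1$ for given $(s,p)$ with $p>0$ is strictly less than $s$, and the supremum over all feasible $y$ is approached only as $p\to0$. Hence at a genuine maximum we must have $p=\prod_j y_j=0$, i.e. some $y_j=0$.

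Next, with $p=0$ the constraint $\prod x_j=0$ forces some $x_j=0$; since we are maximizing $x_1$, we may assume $x_2=\dots=x_N=0$ (concentrating all the $x$-mass on $x_1$ can only help, as it keeps the product zero and is compatible with any prescribed sum). Then $x_1=\sum_j x_j=\sum_j y_j\le 1+(N-1)\cdot 1=N$, but we must still have $\prod_j y_j=0$, so at least one $y_j=0$, giving $x_1=\sum_j y_j\le 1+(N-2)\cdot1+0=N-1$. This is the bound \eqref{E:mm3} after undoing the normalization $y_1=1$. Equality forces exactly one $y_j$ to be $0$ and all the others to equal $1$, and $x_2=\dots=x_N=0$, $x_1=N-1$, which is precisely the claimed extremal configuration; tracking back through the normalization gives the scaling constant $c$.

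The step I expect to be the main obstacle is making rigorous the informal claim that ``pushing mass onto $x_1$ while keeping $\sum x_j$ fixed decreases $\prod x_j$'' and hence that the maximum cannot occur with $\prod_j x_j>0$. The clean way to do this is a Lagrange-multiplier / perturbation argument: at an interior maximizer (all $x_j>0$), the gradient condition for maximizing $x_1$ subject to $\sum x_j=\mathrm{const}$ and $\prod x_j=\mathrm{const}$ forces $x_2,\dots,x_N$ to take at most two distinct values, and a short case analysis of the resulting one- or two-parameter family shows $x_1$ is not stationary there unless it is pushed to the boundary $\prod x_j=0$. Alternatively one can avoid Lagrange multipliers entirely by the explicit two-variable deformation $x_1\mapsto x_1+\varepsilon$, $x_i\mapsto x_i-\varepsilon$ for a suitable $i$ (adjusting a third coordinate to restore the product), checking that for small $\varepsilon>0$ this stays feasible and strictly increases $x_1$; I would present whichever of these is shorter. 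Once the maximum is known to lie on $\{\prod_j x_j=0\}$, the remainder is the elementary counting argument above, and the uniqueness of the extremizer follows by noting every inequality used (AM--GM for the $x_j$, and $y_j\le1$) is strict away from the stated configuration.
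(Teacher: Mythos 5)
Your overall plan matches the paper's proof: normalize, observe compactness so a maximizer $(\bx^*,\by^*)$ exists, rule out the possibility that all $x_j^*,y_j^*>0$ (so that the product constraint forces some $y_j^*=0$), and then finish by the elementary chain $N-1\le x_1^*\le\sum_jx_j^*=\sum_jy_j^*\le N-1$. The paper implements the elimination step exactly via the Lagrange-multiplier route you name as your fallback, so the two arguments essentially coincide. That said, three points need fixing before this is a proof rather than a plan.

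First, your informal AM--GM discussion has the inequality pointing the wrong way: for a \emph{fixed} sum, bringing $x_2,\dots,x_N$ together \emph{increases} their product (equality in AM--GM is the maximum of the product, not the minimum). Your deduction ``so to keep $\prod x_j=p$ fixed we can push mass onto $x_1$'' therefore does not follow from the stated premise. The correct statement is the one you in fact use a line later: for given $(s,p)$, the largest feasible $x_1$ is the larger root of $x_1(s-x_1)^{N-1}=p(N-1)^{N-1}$, which is attained precisely when $x_2=\cdots=x_N$ and which tends to $s$ only as $p\to0$. Second, at a Lagrange critical point with both constraints active, the stationarity condition $\alpha+\beta/x_j^*=0$ for $j\ge2$ has a \emph{single} solution, so $x_2^*,\dots,x_N^*$ are all equal to one value $u$ (not ``at most two distinct values''); the two-value phenomenon appears only for the $y_j^*$ because of the active bound $y_j\le1$. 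Third, the real content of the elimination step is the short case analysis you wave at: either all $y_j^*=1$, in which case equality in AM--GM forces all $x_j^*=1$ and contradicts $x_1^*\ge N-1$; or some $y_j^*=u<1$, in which case the sum constraint gives $N-1+(N-1)u\le\sum x_j^*=\sum y_j^*\le N-1+u$, impossible for $N\ge3$. Those two inequalities are the actual proof, and they should be written out rather than deferred. Once you do so, the rest of your argument (in particular the endgame with $\prod y_j^*=0$) is correct and matches the paper.
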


\begin{proof}
Let $S$ be the set of $(\bx,\by)\in\RR^N\times \RR^N$ that satisfy the constraints \eqref{E:constraints3mean}.
Clearly the numbers $y_j$ lie in $[0,1]$. Also it is easy to see that the numbers $x_j$ lie in $[0,N]$. Therefore $S$ is a compact set.
The function $(\bx,\by)\mapsto x_1$ is continuous, so it attains its maximum on $S$, say at $(\bx^*,\by^*)$.

If $\bx=(N-1,0,\dots,0)$ and $\by=(1,\dots,1,0)$, then $(\bx,\by)\in S$ and $x_1=N-1$. 
Thus we certainly have $x_1^*\ge N-1$.

We shall show by contradiction that at least one of the terms $x_j^*$ or $y_j^*$ is equal to zero.
Suppose, if possible, that $x_j^*>0$ and $y_j^*>0$  for all $j$.
Applying the standard Lagrange-multiplier argument to 
\[
x_1+\alpha\Bigl(\sum_{j=1}^N x_j-\sum_{j=1}^Ny_j\Bigr)+\beta\Bigl(\sum_{j=1}^N\log x_j-\sum_{j=1}^N\log y_j \Bigr),
\]
we see that
\[
\left\{
\begin{aligned}
1+\alpha+\beta/x^*_1&=0,\\
\alpha+\beta/x^*_j&=0, \quad(2\le j\le N),\\
\alpha+\beta/y^*_j&=0, \quad(1\le j\le N,~\text{if~}y^*_j<1).
\end{aligned}
\right.
\]
From the first equation, $\alpha$ and $\beta$ cannot both be zero,
and from the second, they are in fact both non-zero. Writing $u:=-\beta/\alpha$,
we deduce that
\[
\left\{
\begin{aligned}
x_j^*&=u \quad (2\le j\le N),\\
y_j^*&= u \text{~or~} 1 \quad (1\le j\le N).
\end{aligned}
\right.
\]

There are now two possibilities. The first is that all the $y_j^*$ are equal to $1$.
In this case, the arithmetic and geometric means of the $y^*_j$ are equal to $1$, 
so by \eqref{E:constraints3mean}
the arithmetic and geometric means of the $x^*_j$ are also equal to $1$.
By the case of equality in the AM-GM inequality,
this forces all the $x^*_j$ to be equal to $1$.
This contradicts the fact that $x_1^*\ge N-1$. 

The second possibility is that some $y_j^*=u<1$. This implies that
\[
N-1+(N-1)u\le\sum_{j=1}^N x_j^*=\sum_{j=1}^Ny_j^*\le N-1+u.
\]
Since $N\ge3$ and $u>0$, this is impossible. 

Thus both possibilities lead to contradictions. 
We conclude that at least one of the terms $x_j^*$ or $y_j^*$ is equal to zero, as claimed.

Since $\prod_{j=1}^Nx_j^*=\prod_{j=1}^Ny_j^*=0$, 
at least one $y_j^*=0$, so $\sum_{j=1}^N y^*_j\le N-1$. It follows that
\[
N-1\le x_1^*\le \sum_{j=1}^N x^*_j=\sum_{j=1}^Ny^*_j\le N-1.
\]
Therefore we have equality throughout, which shows that $x_1^*=N-1$ and $x_j^*=0$ for all $j\ge2$,
and also that all but one of the $y_j^*$ satisfy $y_j^*=1$.
This concludes the proof.
\end{proof}

%%%%%%%%%%%%%%%%%%%%%%%%%%%%%%%%%%%%

\section{Four-mean theorem}\label{S:4mean}

Following the idea of the preceding section, we shall prove Theorem~\ref{T:4mean}
by formulating it as the solution to an optimization problem. Here is the problem:

\begin{problem}\label{Pb:4mean}
Let $N\ge4$. 
Maximize $x_1$ subject to the following constraints:
\begin{equation}\label{E:constraints4mean}
\left\{
\begin{aligned}
&x_j\ge0 ~(j=1,\dots,N),\\
&y_1=1,~0\le y_j\le 1 ~(2\le j\le N),\\
&\sum_{j=1}^N x_j=\sum_{j=1}^N y_j,\\
&\prod_{j=1}^N x_j=\prod_{j=1}^N y_j,\\
&\sum_{k=1}^N\prod_{\substack{j=1\\j\ne k}}^Nx_j=\sum_{k=1}^N\prod_{\substack{j=1\\j\ne k}}^Ny_j.
\end{aligned}
\right.
\end{equation}
\end{problem}

And here is the solution.

\begin{theorem}\label{T:soln4mean}
Let $N\ge4$. The maximum value of $x_1$ subject to the constraints \eqref{E:constraints4mean} is $N-2$, 
attained uniquely when
$x_2=\cdots=x_N=0$ and all but two of the $y_j$ are equal to $1$, the remaining ones being equal to $0$.
\end{theorem}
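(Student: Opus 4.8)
The plan is to mimic the proof of Theorem~\ref{T:soln3mean}. First I would note that the set $S\subset\RR^N\times\RR^N$ defined by \eqref{E:constraints4mean} is compact (the $y_j$ lie in $[0,1]$, and then \eqref{E:am} confines the $x_j$ to $[0,N]$), so $(\bx,\by)\mapsto x_1$ attains a maximum on $S$ at some $(\bx^*,\by^*)$; since $\bx=(N-2,0,\dots,0)$, $\by=(1,\dots,1,0,0)$ lies in $S$, we have $x_1^*\ge N-2$. The argument then splits according to whether $\prod_j x_j^*=\prod_j y_j^*$ is $0$ or strictly positive.

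In the degenerate case $\prod_j x_j^*=\prod_j y_j^*=0$, I would exploit the shape of \eqref{E:hm}: since $x_1^*\ge N-2>0$, the vanishing $x_j^*$ occur only among $j\ge2$, and the left-hand side of \eqref{E:hm} is $0$ if at least two of the $x_j^*$ vanish but equals the single surviving product $\prod_{j\ne k}x_j^*>0$ if exactly one does (with index $k$), with the analogous dichotomy for the $y_j^*$. Comparing the two sides shows that the number of vanishing $x_j^*$ and the number of vanishing $y_j^*$ are either both $\ge2$ or both exactly $1$. In the first case \eqref{E:am} already gives $x_1^*\le\sum_j x_j^*=\sum_j y_j^*\le N-2$. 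In the second, deleting the two zeros produces $(N-1)$-tuples with equal sums (by \eqref{E:am}) and equal products (by \eqref{E:hm}), so Theorem~\ref{T:3mean}, applied in dimension $N-1\ge3$, gives $x_1^*\le(N-2)\max_j y_j^*\le N-2$. Either way $x_1^*=N-2$, and tracing through the corresponding equality cases (that of Theorem~\ref{T:3mean} in the second) identifies $(\bx^*,\by^*)$ with the asserted configuration.

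The substantive case is $\prod_j x_j^*=\prod_j y_j^*>0$, where every entry is strictly positive and, as in the proof of Corollary~\ref{C:4mean}, \eqref{E:hm} is equivalent to $\sum_j 1/x_j^*=\sum_j 1/y_j^*$. Here I would run the Lagrange-multiplier argument on
\[
x_1+\alpha\Bigl(\sum_j x_j-\sum_j y_j\Bigr)+\beta\Bigl(\sum_j\log x_j-\sum_j\log y_j\Bigr)+\gamma\Bigl(\sum_j\frac1{x_j}-\sum_j\frac1{y_j}\Bigr),
\]
bearing in mind the active constraints $y_j\le1$. The stationarity equations show that each $x_j^*$ with $j\ge2$, and each $y_j^*$ strictly less than $1$, is a root of the single quadratic $\alpha t^2+\beta t-\gamma$, while $x_1^*$ is a root of $(1+\alpha)t^2+\beta t-\gamma$; in particular $x_2^*,\dots,x_N^*$ take at most two distinct values and $x_1^*$ is strictly larger than all of them. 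This leaves only finitely many configurations, indexed by how many of the $x_j^*$ and $y_j^*$ equal each root and how many of the $y_j^*$ equal $1$; for each, \eqref{E:am}, \eqref{E:gm}, \eqref{E:hm} become three scalar relations which (in the simplest cases, by multiplying the arithmetic-mean relation by the harmonic-mean one and using \eqref{E:gm}) force $x_1^*=1$, contradicting $x_1^*\ge N-2\ge2$. Hence the maximum is never attained with all entries positive, so its value is $N-2$ and it is attained only at the point found in the degenerate case.

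I expect this last step to be the real obstacle. Most of the positive-case configurations are routine and mirror the two alternatives encountered in the proof of Theorem~\ref{T:soln3mean}; the awkward one is where the multiplier $\alpha$ is negative and $x_2^*,\dots,x_N^*$ genuinely realize two distinct positive values $a<b$ (necessarily with $x_1^*>b$), which will require a careful though finite computation. Everything else is bookkeeping plus the appeal to the three-mean theorem established in Section~\ref{S:3mean}.
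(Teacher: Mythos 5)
Your outline matches the paper's in broad strokes: compactness to get an extremal point, a split on whether all entries are positive, a Lagrange-multiplier argument in the positive case, and a reduction to Theorem~\ref{T:3mean} in the degenerate case (your handling of the degenerate case, via the dichotomy on the number of vanishing entries, is correct and is essentially what the paper does). The problem is precisely where you flag it: in the positive case you have left open the configurations where $x_2^*,\dots,x_N^*$ realize two distinct values $u$ and $v$, and you have not indicated how to rule them out. That is not a routine remainder --- it is the main difficulty.

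The paper disposes of this with an observation you are missing. After the stationarity equations give $x_j^*\in\{u,v\}$ for $j\ge2$ and $y_j^*\in\{u,v,1\}$, one notes that if \emph{any} $x_j^*$ equals \emph{any} $y_k^*$, one may delete that common value from both tuples; the surviving $(N-1)$-tuples still have equal sums and equal products, so Theorem~\ref{T:3mean} in dimension $N-1$ gives $x_1'\le N-2$, hence equality, hence (by the equality case of Theorem~\ref{T:3mean}) $x_j'=0$ for $j\ge2$, contradicting positivity. This forces $x_j^*=u$ for \emph{all} $j\ge2$ and $y_j^*\in\{v,1\}$ --- exactly the ``awkward'' two-value scenario you worry about cannot occur. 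What is left is a clean family parametrized by $(u,v,r)$ with $r$ the number of $y_j^*$ equal to $v$, and even then the contradiction is not a one-line bookkeeping step: the paper isolates a technical lemma (polynomial sign analyses via Descartes' rule of signs, plus the ``multiply \eqref{E:1'} by \eqref{E:3'}'' identity you gesture at) to kill the two subcases $u<v$ and $u>v$. So your proposal is missing both the reduction that collapses the configuration count and the algebraic lemma that finishes the surviving cases; without them the proof does not close.
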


Before embarking upon the main proof, it will be convenient to separate out some
algebraic results needed in the course of the argument.

\begin{lemma}\label{L:soln4mean}
Let $N\ge 4$ and let $r\in\{1,2,\dots,N-1\}$.
\begin{enumerate}[\normalfont\rm(i)]
\item If
\[
f(t):=t^{2N-2}-(N-1)t^N+(N-1)t^{N-2}-1, 
\]
then $f(t)\ne0$ for all $t>0$ except $t=1$.
\item If
\[
g(u,v):=(N-1)u^N - (N-r)u^{N-1}-rvu^{N-1}+v^r,
\]
then $g(u,v)\ne0$ in $0<u<v<1$.
\end{enumerate}
\end{lemma}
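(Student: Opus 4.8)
For part (i), the plan is to factor $f$ as much as possible and then analyze the remaining factor on $(0,\infty)$. Note first that $f(1)=1-(N-1)+(N-1)-1=0$, so $(t-1)$ divides $f$. In fact $f$ is palindromic up to sign: writing $f(t)=t^{2N-2}-(N-1)t^N+(N-1)t^{N-2}-1$ and comparing with $t^{2N-2}f(1/t)$, one checks $t^{2N-2}f(1/t)=1-(N-1)t^{N-2}+(N-1)t^N-t^{2N-2}=-f(t)$, so $f$ is anti-palindromic; hence $(t+1)$ also divides $f$, and $(t^2-1)\mid f(t)$. The quotient $h(t):=f(t)/(t^2-1)$ is then a polynomial of degree $2N-4$, and the claim is equivalent to showing $h(t)\ne 0$ for $t>0$. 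Here I would compute $h$ explicitly: dividing, $h(t)=t^{2N-4}+t^{2N-6}+\cdots$ — the cleanest route is to write $t^{2N-2}-1=(t^2-1)(t^{2N-4}+t^{2N-6}+\cdots+1)$ and $(N-1)(t^N-t^{N-2})=(N-1)t^{N-2}(t^2-1)$, giving
\[
h(t)=\sum_{k=0}^{N-2}t^{2k}-(N-1)t^{N-2}.
\]
Now $h(t)>0$ for all $t>0$ by AM--GM: the $N-1$ terms $t^0,t^2,\dots,t^{2N-4}$ have arithmetic mean at least their geometric mean $t^{N-2}$, with equality iff all the powers agree, i.e.\ $t=1$; but we have already removed the factor vanishing at $t=1$, so in fact strict inequality $h(t)>0$ holds for $t\ne 1$, and $h(1)=(N-1)-(N-1)\cdot 1=0$ is irrelevant since that zero was absorbed by $t^2-1$. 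Wait — one must be careful: $h(1)=0$ means $t=1$ is a double root of $f$ of even higher order; but that is harmless, the assertion only concerns $t>0$, $t\ne 1$, and for such $t$ we have $h(t)>0$ strictly, hence $f(t)=(t^2-1)h(t)\ne 0$. So part (i) reduces entirely to one application of AM--GM.

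For part (ii), the plan is to fix $v\in(0,1)$ and study $g$ as a function of $u$ on $(0,v)$. Write $g(u,v)=(N-1)u^N-(N-r+rv)u^{N-1}+v^r$. As $u\to 0^+$ we get $g\to v^r>0$, and at $u=v$ we get $g(v,v)=(N-1)v^N-(N-r+rv)v^{N-1}+v^r=v^{N-1}\big((N-1)v-(N-r)-rv\big)+v^r=v^{N-1}\big((N-1-r)v-(N-r)\big)+v^r$. Since $0<v<1$ and $r\le N-1$, the bracket $(N-1-r)v-(N-r)<(N-1-r)-(N-r)=-1<0$, so $g(v,v)<v^r$; this does not immediately give a sign, so instead I would look at the derivative in $u$: $\partial_u g=(N-1)Nu^{N-1}-(N-1)(N-r+rv)u^{N-2}=(N-1)u^{N-2}\big(Nu-(N-r+rv)\big)$, which has a single sign change on $(0,v)$, at $u_0=(N-r+rv)/N$. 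Thus $g(\cdot,v)$ is decreasing then increasing, so its infimum on the closed interval $[0,v]$ is attained either at the interior critical point $u_0$ (if $u_0<v$) or at an endpoint. At $u=0$, $g=v^r>0$. So the real content is to show $g(u_0,v)>0$ when $u_0\in(0,v)$ and, if $u_0\ge v$, that $g(v,v)>0$. I expect the endpoint case $g(v,v)>0$, i.e.\ $v^r>v^{N-1}\big((N-r)-(N-1-r)v\big)=v^{N-1}\big((N-r)(1-v)+v\big)$, to follow from $r\le N-1$ together with $v<1$ after dividing by $v^r$; and the critical-point case to be the genuine obstacle.

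The hardest step is therefore estimating $g(u_0,v)$ from below. Here the plan is to substitute $u_0=(N-r+rv)/N$ and exploit that this makes the two leading terms of $g$ combine: $(N-1)u_0^N-(N-r+rv)u_0^{N-1}=u_0^{N-1}\big((N-1)u_0-Nu_0\big)=-u_0^N$, so $g(u_0,v)=v^r-u_0^N=v^r-\big(\tfrac{N-r+rv}{N}\big)^N$. Thus part (ii) reduces to the clean inequality
\[
\Bigl(\frac{N-r+rv}{N}\Bigr)^N<v^r\qquad(0<v<1,\ 1\le r\le N-1).
\]
The quantity $(N-r+rv)/N = \tfrac{N-r}{N}\cdot 1+\tfrac{r}{N}\cdot v$ is a weighted arithmetic mean of $1$ (with weight $(N-r)/N$) and $v$ (with weight $r/N$), so by the weighted AM--GM inequality it is at least the weighted geometric mean $1^{(N-r)/N}v^{r/N}=v^{r/N}$; raising to the $N$th power gives $\big((N-r+rv)/N\big)^N\ge v^r$ — the wrong direction! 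The resolution is that AM--GM with these weights gives $\ge$, but we want $<$; the point is that the function $\varphi(v)=v^r-\big((N-r+rv)/N\big)^N$ must be analyzed directly. We have $\varphi(1)=1-1=0$ and $\varphi(0)=0-\big((N-r)/N\big)^N<0$... so $\varphi$ is not of one sign on all of $[0,1]$, confirming that the critical point $u_0$ genuinely lies outside $(0,v)$ in the relevant range and the infimum of $g(\cdot,v)$ on $[0,v]$ is in fact the endpoint value. I would thus reorganize: show $u_0=(N-r+rv)/N\ge v$ for all $v\in(0,1)$, which holds iff $N-r+rv\ge Nv$ iff $N-r\ge (N-r)v$ iff $v\le 1$ — true. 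Hence $g(\cdot,v)$ is monotone \emph{decreasing} on all of $[0,v]$, so $\min_{[0,v]}g(\cdot,v)=g(v,v)$, and it only remains to prove $g(v,v)>0$ for $0<v<1$, $1\le r\le N-1$, i.e.
\[
v^r>v^{N-1}\bigl((N-r)(1-v)+v\bigr).
\]
Dividing by $v^{N-1}$ (positive) and writing $s=N-1-r\ge 0$, this is $v^{-s}>(s+1)(1-v)+v=1+s(1-v)$, i.e.\ $v^{-s}>1+s(1-v)$, which is exactly Bernoulli's inequality $(1/v)^s=(1+(1/v-1))^s\ge 1+s(1/v-1)>1+s(1-v)$ (the last step since $1/v-1>1-v$ for $0<v<1$), with strict inequality unless $s=0$; and when $s=0$ the claim reads $1>1+0=1$... which fails. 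So for $r=N-1$ ($s=0$) one needs $g(v,v)>0$ to come from a strict inequality somewhere: indeed for $r=N-1$, $g(v,v)=v^{N-1}\big((N-1-r)v-(N-r)\big)+v^r=v^{N-1}(0\cdot v-1)+v^{N-1}=0$, so $g(v,v)=0$ when $r=N-1$; but the lemma asks for $g\ne 0$ on the \emph{open} square $0<u<v<1$, and $u=v$ is on the boundary, so the vanishing at $u=v$ is permitted. For $r=N-1$ and $u<v$ strictly, monotone decreasing on $[0,v]$ with $g(v,v)=0$ gives $g(u,v)>0$. This completes the plan; the one subtlety to watch in writing it up is the boundary behaviour at $u=v$ when $r=N-1$, and making sure the monotonicity argument uses the open interval correctly.
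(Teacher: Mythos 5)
Your argument is correct, and in both parts it departs from the paper's route in a genuine way. For part~(i), the paper observes that $f$ has a triple zero at $t=1$ and then invokes Descartes' rule of signs to cap the number of positive zeros at three; you instead exploit the anti-palindromic symmetry $t^{2N-2}f(1/t)=-f(t)$ to pull out the factor $t^2-1$, leaving the quotient $h(t)=\sum_{k=0}^{N-2}t^{2k}-(N-1)t^{N-2}$, whose positivity for $t\ne1$ is a one-line AM--GM. That is a tidier, more self-contained derivation, and it makes the structure of $f$ (why its only positive root is $1$) visible rather than incidental. For part~(ii) the two proofs share the same skeleton --- control the diagonal value and then propagate into the open region by monotonicity --- but differ in both steps. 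The paper fixes $u$, shows $\partial g/\partial v>0$, and bounds $g(u,u)$ from below using Descartes again (double root at $u=1$, at most two positive zeros). You fix $v$, show the unique critical point $u_0=(N-r+rv)/N$ of $u\mapsto g(u,v)$ lies at or beyond $v$ (a clean consequence of $v\le1$), so that $g(\cdot,v)$ is strictly decreasing on $(0,v)$ and hence $g(u,v)>g(v,v)$; you then bound $g(v,v)\ge0$ by reducing to $v^{-s}\ge1+s(1-v)$ with $s=N-1-r$, which is Bernoulli. You also correctly flag and handle the boundary case $r=N-1$ (where $g(v,v)\equiv0$ and the strict monotonicity alone carries the conclusion). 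So the mathematics is sound throughout, and the replacement of Descartes by AM--GM/Bernoulli arguably makes the proof more elementary. The one criticism is expository: the discussion of part~(ii) retains the false starts (the initial guess that $u_0\in(0,v)$, and the AM--GM attempt that points the wrong way) before arriving at the working argument; in a final write-up you should strip these out and lead with the observation $u_0\ge v$, which makes the whole of part~(ii) quite short.
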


\begin{proof}
(i) A direct calculation shows that $f(1)=f'(1)=f''(1)=0$, in other words, 
that $f$ has a triple zero at $t=1$.
By Descartes' rule of signs, $f$ has at most three zeros in $(0,\infty)$,
counted according to multiplicity. It follows that $f$ has no zeros in $(0,\infty)$ other than~$1$.

(ii) On the diagonal $u=v$, we have
\[
g(u,u)=(N-1-r)u^{N}-(N-r)u^{N-1}+u^r.
\]
This is identically zero if $r=N-1$. Suppose  that $r<N-1$. Then, by
Descartes' rule of signs again, the right-hand side has at most two zeros in $(0,\infty)$.
On the other hand, a direct verification shows that the right-hand side has a double zero at $u=1$.
Therefore $g(u,u)\ne0$ for all $u\in(0,1)$.
Since $g(u,u)=u^r(1+o(u))$ as $u\to0$, it follows that $g(u,u)>0$ for all $u\in (0,1)$.
Putting together the cases $r=N-1$ and $r<N-1$, we obtain
\begin{equation}\label{E:Fdiag}
g(u,u)\ge0 \quad(0<u<1).
\end{equation}
Now, a simple  computation gives
\begin{equation}\label{E:deriv}
\frac{\partial g}{\partial v}(u,v)=-ru^{N-1}+rv^{r-1}>0
\quad(0<u<v<1).
\end{equation}
Finally,   \eqref{E:Fdiag} and \eqref{E:deriv} together imply that $g(u,v)>0$ in $0<u<v<1$.
\end{proof}

\begin{proof}[Proof of Theorem~\ref{T:soln4mean}]
The proof follows the same general lines as that of Theorem~\ref{T:3mean}, though the details
are a bit more involved.

Let $S$ be the set of $(\bx,\by)\in\RR^N\times \RR^N$ obeying the constraints \eqref{E:constraints4mean}.
Then $S$ is a compact set,
so the function $(\bx,\by)\mapsto x_1$ attains its maximum on $S$, say at $(\bx^*,\by^*)$.

If $\bx=(N-2,0,\dots,0)$ and $\by=(1,\dots,1,0,0)$, then $(\bx,\by)\in S$ and $x_1=N-2$,
so we certainly have $x_1^*\ge N-2$.

We claim that at least one of 
the $x_j^*$ or $y_j^*$ is equal to zero. To prove the claim, we proceed by contradiction. 
So, let us suppose, if possible, that $x_j^*>0$ and $y_j^*>0$ for all~$j$.

In this situation, the final constraint in \eqref{E:constraints4mean}
is equivalent to the condition that $\sum_{j=1}^N1/x_j^*=\sum_{j=1}^N1/y_j^*$.
We may therefore apply  the standard Lagrange-multiplier argument to 
\[
x_1+\alpha\Bigl(\sum_{j=1}^N x_j-\sum_{j=1}^Ny_j\Bigr)+\beta\Bigl(\sum_{j=1}^N\log x_j-\sum_{j=1}^N\log y_j \Bigr)
+\gamma\Bigl(\sum_{j=1}^N \frac{1}{x_j}-\sum_{j=1}^N\frac{1}{y_j}\Bigr)
\]
to obtain
\[
\left\{
\begin{aligned}
1+\alpha+\beta/x^*_1-\gamma/(x_1^*)^2&=0,\\
\alpha+\beta/x^*_j-\gamma/(x_j^*)^2&=0, \quad(2\le j\le N),\\
\alpha+\beta/y^*_j-\gamma/(y_j^*)^2&=0, \quad(1\le j\le N,~\text{if~}y^*_j<1).
\end{aligned}
\right.
\]
The first equality shows that the constants $\alpha,\beta,\gamma$ are not all zero,
and the remaining equalities show that 
\[
\left\{
\begin{aligned}
x_j^*&=u\text{~or~} v  \quad (2\le j\le N),\\
y_j^*&= u\text{~or~} v \text{~or~} 1 \quad (1\le j\le N),
\end{aligned}
\right.
\]
where $u$ and $v$ are the roots of $\alpha t^2+\beta t-\gamma=0$.

If some $x_j^*$ is equal to some $y_k^*$, then the vectors 
 $\bx',\by'\in\RR^{N-1}$ formed by the remaining components  satisfy
the hypotheses of Theorem~\ref{T:3mean} (with $N$ replaced by $N-1$). 
By that theorem, we deduce that $x_1'\le N-2$. 
Since $x_1'=x_1^*\ge N-2$, we actually have equality throughout.
By the case of equality in Theorem~\ref{T:3mean},
all the remaining $x_j'=0$ , which contradicts the supposition that $x_j^*>0$ for all~$j$.
We are thus led to conclude that, in fact,
\[
\left\{
\begin{aligned}
x_j^*&=u  \quad (2\le j\le N),\\
y_j^*&= v \text{~or~} 1 \quad (1\le j\le N).
\end{aligned}
\right.
\]

If all the numbers $y_j^*$  are equal to $1$,
then the arithmetic and geometric means of the $y^*_j$ are equal to $1$, 
so by \eqref{E:constraints4mean}
the arithmetic and geometric means of the $x^*_j$ are also equal to $1$,
which forces all the $x^*_j$ to be equal to $1$.
This contradicts the fact that $x_1^*\ge N-2$. 
We conclude that there exists an integer $r$ with $1\le r\le N-1$
such that exactly $r$ of the $y_j^*$ are equal to $v$ and the remaining $(N-r)$ are equal to~$1$. 
The constraints \eqref{E:constraints4mean} then become
\begin{align}
x_1^*+(N-1)u&=N-r+rv, \label{E:1'}\\
x_1^*u^{N-1}&=v^r, \label{E:2'}\\
\frac{1}{x_1^*}+\frac{N-1}{u}&=N-r+\frac{r}{v}. \label{E:3'}
\end{align}
We also have $u\ne v$ and $v<1$.

The argument now subdivides into two cases, according to whether $u<v$ or $u> v$.

\emph{Case I:}  $u<v$. Eliminating $x_1^*$ from \eqref{E:1'} and \eqref{E:2'} gives
\begin{equation}\label{E:Nr}
v^r+(N-1)u^N=(N-r)u^{N-1}+rvu^{N-1}.
\end{equation}
In other words, $g(u,v)=0$, where $g$ is the function in Lemma~\ref{L:soln4mean}\,(ii).
But by that lemma, $g(u,v)\ne0$ for $0<u<v<1$.
This contradiction concludes the argument for Case~I.

\emph{Case II:} $u> v$.
In this case we must have $r=1$. Indeed, by \eqref{E:1'}, we have
\[
N-r+rv=x_1^*+(N-1)u\ge N-2+(N-1)v,
\]
which, after simplification, leads to
\[
(N-1-r)v\le 2-r.
\]
Since the left-hand side is non-negative, we must have $r\le2$. 
Also, if $r=2$, then the left-hand side is zero, 
which implies that $r=N-1$, contradicting the fact that $N\ge4$. 
The only remaining possibility is that $r=1$, as claimed.

Multiplying together \eqref{E:1'} and \eqref{E:3'}, and recalling that $r=1$, we obtain
\[
1+(N-1)^2+(N-1)\frac{x_1^*}{u}+(N-1)\frac{u}{x_1^*}=(N-1)^2+1+\frac{N-1}{v}+(N-1)v,
\]
which, after simplification, becomes
\[
\frac{x_1^*}{u}+\frac{u}{x_1^*}=v+\frac{1}{v}.
\]
Since the function $t\mapsto (t+1/t)$ is $2$-to-$1$ on $t>0$, 
it follows that either $x_1^*/u=v$ or $x_1^*/u=1/v$.
In the first case, \eqref{E:2'} implies that $vu^N=v$, which in turn implies that $u=1$ and $x_1^*=v<1$, a contradiction. 
So we must have $x_1^*=u/v$. Substituting this information into 
\eqref{E:2'}, we find that $u^N=v^2$ and $x_1^*=u^{1-N/2}$. 
Substituting this into \eqref{E:1'},  and rearranging, we obtain
\[
u^{N-1}-(N-1)u^{N/2}+(N-1)u^{N/2-1}-1=0,
\]
in other words, $f(u^{1/2})=0$, where $f$ is  the polynomial in Lemma~\ref{L:soln4mean}\,(i).
By that lemma,   $f(t)\ne0$ for all $t>0$ except $t=1$.
We conclude that $u=1$, and hence that $x_1^*=1$,  
contradicting the fact that $x_1^*\ge N-2$.
This concludes the argument for Case~II.

Thus, whichever case we are in, we arrive at a contradiction. 
This shows that, as claimed, at least one of the $x_j^*$ or $y_j^*$ is equal to zero.

Because their geometric means are equal,
both vectors $\bx^*,\by^*$ contain a component equal to zero. 
The vectors $\bx',\by'\in\RR^{N-1}$ formed by the remaining components then satisfy
the hypotheses of Theorem~\ref{T:3mean} (with $N$ replaced by $N-1$). 
By that theorem, we deduce that $x_1'\le N-2$. 
Since $x_1'=x_1^*\ge N-2$, we actually have equality throughout.
By the case of equality in Theorem~\ref{T:3mean},
all the remaining $x_j'$ are equal to $0$ and all but one of the $y_j'$ are equal to $1$, the remaining
one being equal to zero. We conclude that $x_1^*=N-2$, 
all the remaining $x_j^*$ are equal to $0$ and all but two of the $y_j^*$ are equal to $1$, the remaining ones 
being equal to zero. This completes the proof of Theorem~\ref{T:soln4mean}, and with it, that of Theorem~\ref{T:4mean}.
\end{proof}

\section{Super-identical pseudospectra}\label{S:FRsharp}

The proof of Theorem~\ref{T:FRsharp} is based on 
Theorem~\ref{T:4mean} and the following lemma.
As before, we write $s_1,\dots,s_N$ to denote the singular values of an $N\times N$ matrix, ordered so that $s_1\ge\dots\ge s_N$.

\begin{lemma}\label{L:FRsharp}
Let $N\ge 4$ and let $A,B$ be complex $N\times N$ matrices with super-identical pseudospectra. Then, for every polynomial $p$,
\begin{align}
\sum_{j=1}^N s_j(p(A))^2&=\sum_{j=1}^N s_j(p(B))^2, \label{E:FRsharp1}\\
\prod_{j=1}^N s_j(p(A))^2&=\prod_{j=1}^N s_j(p(B))^2, \label{E:FRsharp2}\\
\intertext{and}
\sum_{k=1}^N\prod_{\substack{j=1\\j\ne k}}^Ns_j(p(A))^2&=\sum_{k=1}^N\prod_{\substack{j=1\\j\ne k}}^Ns_j(p(B))^2. \label{E:FRsharp3}
\end{align}
\end{lemma}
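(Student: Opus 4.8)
The three identities are all statements about the singular values of $p(A)$ versus those of $p(B)$, so the natural strategy is to express each symmetric function of the $s_j(p(A))^2$ in terms of the data that super-identical pseudospectra give us, namely $s_j(A-zI)$ for all $j$ and all $z$. The key observation is that $s_j(M)^2$ are the eigenvalues of $M^*M$, so $\sum_j s_j(M)^2=\tr(M^*M)$, $\prod_j s_j(M)^2=\det(M^*M)=|\det M|^2$, and the remaining symmetric function $\sum_k\prod_{j\ne k}s_j(M)^2$ is the second-highest coefficient of the characteristic polynomial of $M^*M$, i.e.\ $\tr\bigl(\operatorname{adj}(M^*M)\bigr)$. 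I would therefore aim to show that each of these three scalar quantities, applied to $M=p(A)$, equals the corresponding quantity for $M=p(B)$.

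\textbf{First the determinant identity \eqref{E:FRsharp2}.} Here $\prod_j s_j(p(A))^2=|\det p(A)|^2$, and $\det p(A)=\prod_i p(\lambda_i)$ where $\lambda_i$ are the eigenvalues of $A$ with multiplicity. But the eigenvalues of $A$ are exactly the points $z$ where $s_N(A-zI)=0$, with algebraic multiplicities recoverable from the order of vanishing of $\det(A-zI)=\prod_j\bigl(\text{something}\bigr)$; more cleanly, $|\det(A-zI)|^2=\prod_j s_j(A-zI)^2$ is determined by the pseudospectral data for every $z$, hence $z\mapsto|\det(A-zI)|$ agrees for $A$ and $B$, which forces $A$ and $B$ to have the same eigenvalues with the same multiplicities (compare zero sets and orders of the entire function, or just note two monic polynomials of degree $N$ with equal modulus on $\CC$ are equal up to a unimodular constant). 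Consequently $|\det p(A)|=\prod_i|p(\lambda_i)|=|\det p(B)|$, giving \eqref{E:FRsharp2}.

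\textbf{Then the trace identities \eqref{E:FRsharp1} and \eqref{E:FRsharp3}.} For these I would use the fact, already available from the literature on super-identical pseudospectra (this is essentially how \eqref{E:FRineq} was proved in \cite{FR09}), that the hypothesis is equivalent to $\det\bigl((A-zI)^*(A-zI)-tI\bigr)=\det\bigl((B-zI)^*(B-zI)-tI\bigr)$ for all $z,t$, and expanding this polynomial identity in $z,\bar z,t$ extracts matrix invariants. In particular, matching the coefficient of $t^{N-1}$ gives $\tr\bigl((A-zI)^*(A-zI)\bigr)=\tr\bigl((B-zI)^*(B-zI)\bigr)$ for all $z$, i.e.\ $\tr(A^*A)-2\Re(\bar z\,\tr A)+N|z|^2$ is the same for $A$ and $B$; matching the coefficient of $t^{N-2}$ gives the analogous statement for $\sum_k\prod_{j\ne k}s_j(A-zI)^2$. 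More to the point, I want the version of this with $A-zI$ replaced by $p(A)-wI$. The cleanest route: super-identical pseudospectra imply (by \cite{AGV12}, or directly) that there is a single unitary-like normal form, but safest is to invoke the known consequence that $s_j\bigl(p(A)-wI\bigr)=s_j\bigl(p(B)-wI\bigr)$ for all $w\in\CC$ and all polynomials $p$ — this is the substantive input, proved in \cite{FR09} via the observation that the singular-value data of $A-zI$ for all $z$ determines, and is determined by, that of $q(A)-wI$ for all $q,w$ (the Frobenius companion / functional-calculus argument). Granting that, apply the coefficient-matching for $t^{N-1}$ and $t^{N-2}$ with $w=0$ to the pair $p(A),p(B)$: the $t^{N-1}$ coefficient of $\det(p(A)^*p(A)-tI)$ is $-\sum_j s_j(p(A))^2$, yielding \eqref{E:FRsharp1}, and the $t^{N-2}$ coefficient is $\sum_k\prod_{j\ne k}s_j(p(A))^2$, yielding \eqref{E:FRsharp3}.

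\textbf{Main obstacle.} The genuine content is the step asserting that super-identical pseudospectra for $A,B$ imply $s_j(p(A)-wI)=s_j(p(B)-wI)$ for every polynomial $p$ and every $w$ — equivalently that the list of functions $z\mapsto(s_1(A-zI),\dots,s_N(A-zI))$ determines all such higher-order singular-value data. Everything else is routine linear algebra (symmetric functions of eigenvalues of $M^*M$, Descartes-free expansions of $\det(M^*M-tI)$). I expect to handle this obstacle by citing \cite[Theorem~1.3 and its proof]{FR09}, where precisely this reduction is carried out; if a self-contained argument is wanted, one expands $(p(A)-wI)^*(p(A)-wI)$ as a polynomial in $A,A^*$ and uses that all mixed traces $\tr\bigl(q_1(A)q_2(A)^*\bigr)$ are determined by the pseudospectral data (again via \cite{FR09}), but the cleanest exposition is simply to quote it.
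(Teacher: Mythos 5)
There is a genuine gap in the part of the argument handling \eqref{E:FRsharp1} and especially \eqref{E:FRsharp3}, and it is a serious one: the ``safest'' key step you propose to invoke, namely that super-identical pseudospectra of $A,B$ imply $s_j(p(A)-wI)=s_j(p(B)-wI)$ for \emph{all} $j$, all polynomials $p$ and all $w$, is false. If it were true, then taking $j=1$ and $w=0$ would give $\|p(A)\|=\|p(B)\|$ for every $p$, which would make Theorem~\ref{T:FRsharp} trivially hold with constant $1$; but the paper's own Proposition~\ref{P:4x4} exhibits $4\times4$ matrices $A,B$ with super-identical pseudospectra and $s_2(A^2)\ne s_2(B^2)$ (indeed $s_1(A^2)\ne s_1(B^2)$) whenever $\alpha\ne\beta$. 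What \cite{FR09} actually establishes is only the Frobenius-norm identity $\tr(p(A)^*p(A))=\tr(p(B)^*p(B))$, i.e.\ \eqref{E:FRsharp1}; it does not determine the individual singular values of $p(A)$, and it does not by itself give the $t^{N-2}$ coefficient identity \eqref{E:FRsharp3} for $p(A)$ (which, via Newton's identities, would need $\tr\bigl((p(A)^*p(A))^2\bigr)$ to be determined --- a ``longer'' mixed trace that is not covered by the same argument, and indeed cannot be, or the false claim above would follow).

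The paper closes exactly this gap by a different device: it first proves \eqref{E:FRsharp2} by invoking \cite{AGV12} (similarity of $A$ and $B$, hence $\det p(A)=\det p(B)$), and then derives \eqref{E:FRsharp3} from \eqref{E:FRsharp1} and \eqref{E:FRsharp2} together. Concretely, when $p(A)$ is invertible one writes $pq+fr=1$ with $f$ the characteristic polynomial, so $q(A)=p(A)^{-1}$ and $q(B)=p(B)^{-1}$; applying \eqref{E:FRsharp1} to $q$ gives $\sum_j s_j(p(A))^{-2}=\sum_j s_j(p(B))^{-2}$, and multiplying this by \eqref{E:FRsharp2} yields \eqref{E:FRsharp3}. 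The singular case follows by perturbing $p$ to $p+\epsilon$ and letting $\epsilon\to0$. You should replace your treatment of \eqref{E:FRsharp3} with this (or an equivalent) argument. Your treatment of \eqref{E:FRsharp2}, by contrast, is correct and genuinely different from the paper's: you recover equality of characteristic polynomials of $A$ and $B$ directly from the equality of $|\det(A-zI)|$ for all $z$, rather than invoking the similarity theorem of \cite{AGV12}; that is a perfectly good self-contained alternative, though note it is only \eqref{E:FRsharp3} (not \eqref{E:FRsharp2}) that is the real sticking point.
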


\begin{proof}
The equality \eqref{E:FRsharp1} was already obtained in the course of the proof of \eqref{E:FRineq}, see \cite[pp. 516--517]{FR09}. We do not repeat the argument here.

The equality \eqref{E:FRsharp2} is a consequence of the result of Armentia, Gracia and Velasco mentioned earlier, according to which
matrices with super-identical pseudospectra are always similar.
Thus $A$ and $B$ are similar, hence also $p(A)$ and $p(B)$.
In particular, $\det(p(A))=\det(p(B))$. Since the absolute value of the determinant is the product of the singular values, we deduce that
\eqref{E:FRsharp2} holds.
 
To establish \eqref{E:FRsharp3}, let us first consider the case when $p(A)$  and $p(B)$ are invertible. Then $p$ has no common zeros with the characteristic polynomial $f$ of $A$, so there exist polynomials $q,r$ such that $pq+fr=1$. Since $f(A)=0$, it follows that
$p(A)q(A)=I$. As $A,B$ are similar, $f(B)=0$ as well, and so also $p(B)q(B)=I$. By \eqref{E:FRsharp1}, with $p$ replaced by $q$, we have
\[
\sum_{j=1}^N s_j(q(A))^2=\sum_{j=1}^N s_j(q(B))^2.
\]
But also we have $q(A)=p(A)^{-1}$, and the singular values of $p(A)^{-1}$ are $1/s_N(p(A)),\dots,1/s_1(p(A))$. 
Likewise for $B$. It follows that
\[
\sum_{j=1}^N \frac{1}{s_j(p(A))^2}=\sum_{j=1}^N \frac{1}{s_j(p(B))^2}.
\]
Multiplying this equation by equation \eqref{E:FRsharp2}, we obtain \eqref{E:FRsharp3}. This proves \eqref{E:FRsharp3} in the case when $p(A)$ and $p(B)$ are invertible. The general case follows by replacing $p(z)$ by $p(z)+\epsilon$, where $\epsilon\ne0$, and then letting $\epsilon\to0$.
\end{proof}

\begin{proof}[Proof of Theorem~\ref{T:FRsharp}]
Let $A,B$ have super-identical pseudospectra, and let $p$ be a polynomial. Then Lemma~\ref{L:FRsharp} implies that the non-negative numbers $x_j:=s_j(p(A))^2$ and $y_j:=s_j(p(B))^2$
satisfy the relations \eqref{E:am}, \eqref{E:gm} and \eqref{E:hm} of Theorem~\ref{T:4mean}. By that theorem, it follows that
\[
\max_js_j(p(A))^2\le (N-2)\max_j s_j(p(B))^2,
\]
in other words, that the operator norms of $p(A)$ and $p(B)$ satisfy
\begin{equation}\label{E:sqineq}
\| p(A)\|^2\le (N-2)\|p(B)\|^2.
\end{equation}

If we have equality in \eqref{E:sqineq}, then,
by the case of equality in Theorem~\ref{T:4mean},
there exists $c\ge0$ such that
\begin{align*}
(s_1(A)^2,\dots,s_N(A)^2)&=c(N-2,0,\dots,0),\\
(s_1(B)^2,\dots,s_N(B)^2)&=c(1,\dots,1,0,0).
\end{align*}
However, since $A,B$ are similar, $p(A)$ and $p(B)$ have the same rank, so they have exactly the same number of non-zero singular values. This can only happen if $c=0$. Thus equality holds in \eqref{E:sqineq} if and 
only if $p(A)=p(B)=0$. 

This completes the proof of Theorem~\ref{T:FRsharp} except for the sharpness statement,
which will be treated in the next section.
\end{proof}

%%%%%%%%%%%%%%%%%%%%%%%%%%%%%%%%%

\section{Sharpness results}\label{S:sharpness}

Both the sharpness statement in Theorem~\ref{T:FRsharp}
and the negative result about quantitative similarity, Theorem~\ref{T:quantsim}, 
are consequences of the example contained in the following proposition.

\begin{proposition}\label{P:4x4}
Let $\alpha,\beta\in(0,\pi/4]$, and let
\[
A=
\begin{pmatrix}
0 &\sec\alpha &0 &1\\
0 &0 &\sec\beta\csc\beta &0\\
0 &0 &0 &\csc\alpha\\
0 &0 &0 &0\\
\end{pmatrix}
,~
B=
\begin{pmatrix}
0 &\sec\beta &0 &1\\
0 &0 &\sec\alpha\csc\alpha &0\\
0 &0 &0 &\csc\beta\\
0 &0 &0 &0\\
\end{pmatrix}.
\]
Then $A,B$ have super-identical pseudospectra, and
\begin{equation}\label{E:ratios}
\frac{s_1(A^2)}{s_1(B^2)}=\frac{\cos\alpha}{\cos\beta}
\quad\text{and}\quad
\frac{s_2(A^2)}{s_2(B^2)}=\frac{\sin\alpha}{\sin\beta}.
\end{equation}
\end{proposition}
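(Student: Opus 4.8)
The plan is to verify the two claims of Proposition~\ref{P:4x4} --- the super-identical pseudospectra and the singular-value ratios --- essentially by direct computation, exploiting the highly structured (strictly upper triangular, almost bidiagonal) form of $A$ and $B$.

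First I would observe that both $A$ and $B$ are nilpotent of index $4$, with $A^4=B^4=0$, and that they are Hessenberg-like with a single nonzero entry in the first row that is not on the superdiagonal. To establish that $A,B$ have super-identical pseudospectra, I need $s_j(A-zI)=s_j(B-zI)$ for all $j$ and all $z\in\CC$. Equivalently, $(A-zI)^*(A-zI)$ and $(B-zI)^*(B-zI)$ must have the same eigenvalues for every $z$, i.e.\ the same characteristic polynomial in a parameter $\lambda$. The cleanest route is to compute $\det\bigl(\lambda I-(A-zI)^*(A-zI)\bigr)$ as a polynomial in $\lambda$, $z$, $\bar z$ and check it is symmetric under swapping $\alpha\leftrightarrow\beta$ (which is exactly the transformation carrying $A$ to $B$). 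Because the matrices are $4\times4$ and sparse, this determinant expansion is finite and manageable; the nonzero entries are $\sec\alpha,\ \sec\beta\csc\beta,\ \csc\alpha,\ 1$ for $A$, and the same list with $\alpha,\beta$ interchanged for $B$. I would group the terms of the characteristic polynomial by their dependence on these entries and show that each symmetric function of the entries that appears is invariant under $\alpha\leftrightarrow\beta$; the only potentially asymmetric combinations are products like $\sec^2\alpha\cdot\csc^2\beta$ versus $\sec^2\beta\cdot\csc^2\alpha$, and I would check these enter only in symmetric combinations (their sum, or multiplied by the symmetric quantity $\sec^2\alpha\csc^2\beta\cdot\sec^2\beta\csc^2\alpha$, etc.), using $\sec\theta\csc\theta=1/(\sin\theta\cos\theta)$ to simplify. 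Alternatively, and perhaps more slickly, one can cite or re-derive the criterion of \cite{FR09}/\cite{AGV12} that equality of all these determinants is what "super-identical pseudospectra" means and reduce to a purely polynomial identity that a short computation confirms.

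Next, for \eqref{E:ratios}, I would compute $A^2$ and $B^2$ explicitly. Squaring a strictly upper triangular $4\times4$ matrix with the given sparsity pattern yields a matrix with at most two nonzero entries, in positions $(1,3)$ and $(2,4)$ (and possibly $(1,4)$). For $A$: the $(1,3)$ entry is $\sec\alpha\cdot\sec\beta\csc\beta$, the $(2,4)$ entry is $\sec\beta\csc\beta\cdot\csc\alpha$, and the $(1,4)$ entry comes from $\sec\alpha\cdot 0 + 1\cdot 0 = 0$ together with the path through position $(1,2),(2,?)$ --- one checks the $(1,4)$ entry of $A^2$ vanishes because the only length-two path from $1$ to $4$ would need an intermediate index, and the relevant products are zero. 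So $A^2$ has exactly two nonzero entries, at $(1,3)$ and $(2,4)$, and since these lie in disjoint rows and disjoint columns, the singular values of $A^2$ are simply the absolute values of those two entries (in decreasing order). Thus $\{s_1(A^2),s_2(A^2)\}=\{\,|\sec\alpha\sec\beta\csc\beta|,\ |\sec\beta\csc\beta\csc\alpha|\,\}$, i.e.\ $\dfrac{\sec\beta\csc\beta}{\cos\alpha}$ and $\dfrac{\sec\beta\csc\beta}{\sin\alpha}$ after simplifying $\sec\alpha=1/\cos\alpha$, $\csc\alpha=1/\sin\alpha$. Likewise $B^2$ gives $\dfrac{\sec\alpha\csc\alpha}{\cos\beta}$ and $\dfrac{\sec\alpha\csc\alpha}{\sin\beta}$. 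Since $\alpha,\beta\in(0,\pi/4]$ we have $\sin\theta\le\cos\theta$, so the larger of the two values for $A^2$ is the one with $\cos\alpha$ in the denominator; I need to confirm this ordering survives so that $s_1(A^2)$ pairs with $s_1(B^2)$ correctly. Taking the ratio, the common factor question is the one subtlety: $s_1(A^2)/s_1(B^2)=\dfrac{\sec\beta\csc\beta/\cos\alpha}{\sec\alpha\csc\alpha/\cos\beta}=\dfrac{\cos\beta}{\sin\beta}\cdot\dfrac{\sin\alpha\cos\alpha}{\cos\alpha}=\dfrac{\cos\alpha}{\cos\beta}$ after the cancellation, and similarly $s_2(A^2)/s_2(B^2)=\sin\alpha/\sin\beta$; I would lay this arithmetic out carefully since the $\sec\csc$ factors must cancel exactly.

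The main obstacle, I expect, is the first claim --- verifying super-identical pseudospectra --- because it is a statement about all $z\in\CC$ simultaneously, not just a single matrix identity. The risk is that the characteristic polynomial of $(A-zI)^*(A-zI)$ is a genuinely messy function of $z,\bar z$ and one must be disciplined about which coefficients to compare. I would mitigate this by noting two structural points: (a) swapping $\alpha\leftrightarrow\beta$ sends $A\mapsto B$ while the map $\theta\mapsto\theta$ on the "middle" entry $\sec\theta\csc\theta$ is symmetric in an obvious way, so many coefficients are manifestly invariant; and (b) one may reduce the number of free parameters by a diagonal unitary conjugation to make all nonzero entries real and positive, and possibly by the substitution that already appears in \cite{FR09} --- the proposition is explicitly attributed there as "the example from \cite{FR09}", so I would be entitled to invoke that the super-identical property was already verified in \cite{FR09} and devote the written proof mainly to \eqref{E:ratios}, with at most a brief indication of why the pseudospectra match. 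The singular-value computation, by contrast, is routine once one notices the disjoint-support structure of $A^2$ and $B^2$, so it presents no real difficulty beyond bookkeeping of the trigonometric factors.
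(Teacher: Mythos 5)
Your overall plan matches the paper: defer the super-identical-pseudospectra claim to \cite{FR09} (the paper cites \cite[Theorem~5.1]{FR09} for exactly this), compute $A^2$ and $B^2$ explicitly, and read off the singular values from the disjoint-support structure of the two nonzero entries at positions $(1,3)$ and $(2,4)$. That structure is correct, as is your check that the $(1,4)$ entry of the square vanishes.

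However, your identification of $s_1$ versus $s_2$ is backwards, and the ratio arithmetic you display does not actually hold. Since $\alpha\in(0,\pi/4]$ we have $\sin\alpha\le\cos\alpha$, hence $\csc\alpha\ge\sec\alpha$; so the \emph{larger} entry of $A^2$ is the one containing $\csc\alpha=1/\sin\alpha$, namely the $(2,4)$ entry $\csc\alpha\sec\beta\csc\beta$, not (as you state) the one with $\cos\alpha$ in the denominator. Correctly,
\[
s_1(A^2)=\csc\alpha\sec\beta\csc\beta,\quad s_2(A^2)=\sec\alpha\sec\beta\csc\beta,\quad
s_1(B^2)=\csc\beta\sec\alpha\csc\alpha,\quad s_2(B^2)=\sec\beta\sec\alpha\csc\alpha,
\]
and then $s_1(A^2)/s_1(B^2)=\sec\beta/\sec\alpha=\cos\alpha/\cos\beta$, while $s_2(A^2)/s_2(B^2)=\csc\beta/\csc\alpha=\sin\alpha/\sin\beta$. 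With your (reversed) choice, the quantity $\dfrac{\sec\beta\csc\beta/\cos\alpha}{\sec\alpha\csc\alpha/\cos\beta}$ actually simplifies to $\csc\beta/\csc\alpha=\sin\alpha/\sin\beta$, not $\cos\alpha/\cos\beta$ as you wrote, so your displayed equality chain is internally inconsistent. The two slips happen to land you on the statement's answer, but the argument as written does not verify it; once you flip the ordering of the singular values and redo the cancellation, the computation goes through and coincides with the paper's.
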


\begin{proof}
The matrices $A,B$ are taken from \cite[Theorem~5.1]{FR09},
where it is shown that they have super-identical pseudospectra.
It remains to establish \eqref{E:ratios}.

A calculation gives
\[
A^2=
\begin{pmatrix}
0 &0 &\sec\alpha\sec\beta\csc\beta &0\\
0 &0 &0&\csc\alpha\sec\beta\csc\beta\\
0 &0 &0 &0\\
0 &0 &0 &0\\
\end{pmatrix}.
\]
Since $\alpha\in(0,\pi/4]$, we have $\sec\alpha\le\csc\alpha$,
whence
\[
s_1(A^2)=\csc\alpha\sec\beta\csc\beta
\quad\text{and}\quad
s_2(A^2)=\sec\alpha\sec\beta\csc\beta.
\]
Similarly
\[
s_1(B^2)=\csc\beta\sec\alpha\csc\alpha
\quad\text{and}\quad
s_2(B^2)=\sec\beta\sec\alpha\csc\alpha.
\]
The result follows.
\end{proof}

\begin{proof}[Proof of the sharpness statement in Theorem~\ref{T:FRsharp}]
Taking $\beta=\pi/4$ and $\alpha$ close to $0$ in Proposition~\ref{P:4x4},
we see that,  given $\epsilon>0$, there exist $4\times 4$ matrices
$A,B$ with super-identical pseudospectra such that  $\|A^2\|/\|B^2\|>\sqrt{2}-\epsilon$.
This demonstrates that, if $N=4$, then the constant $\sqrt{N-2}$ in \eqref{E:FRsharp} is sharp.
\end{proof}

\begin{proof}[Proof of Theorem~\ref{T:quantsim}]
Given $M>0$, choose $\beta:=\pi/4$ and $\alpha>0$ sufficiently small so that $\sin\beta/\sin\alpha>M$.
Let $A,B$ be the $4\times 4$ matrices with super-identical pseudospectra furnished by Proposition~\ref{P:4x4}.

If $W$ is an invertible $N\times N$ matrix such that $B=W^{-1}AW$,
then $p(B)=W^{-1}p(A)W$ for every polynomial $p$.
It follows that
\[
s_j(p(B))\le \|W^{-1}\|s_j(p(A))\|W\|
\]
for all polynomials $p$ and all $j\in\{1,2,3,4\}$.
In particular, we have
\[
\|W\|\|W^{-1}\|\ge \frac{s_2(B^2)}{s_2(A^2)}=\frac{\sin\beta}{\sin\alpha}>M.
\]
This proves \eqref{E:quantsim} and establishes the result.
\end{proof}

%%%%%%%%%%%%%%%%%%%%%%%%%%%%%%%%%

\bibliographystyle{amsplain}
\bibliography{biblist_v6}

%%%%%%%%%%%%%%%%%%%%%%%%%%%%%%%%%

\end{document}